\providecommand{\U}[1]{\protect\rule{.1in}{.1in}}
\providecommand{\U}[1]{\protect\rule{.1in}{.1in}}
\newtheorem{theorem}{Theorem}[section]
\newtheorem{proposition}[theorem]{Proposition}
\newtheorem{corollary}[theorem]{Corollary}
\newtheorem{remark}[theorem]{Remark}
\newtheorem{lemma}[theorem]{Lemma}
\newtheorem{definition}[theorem]{Definition}
\numberwithin{equation}{section}
\DeclareMathOperator{\ext}{ext\,\!}
\begin{document}

\title[A geometric technique for the Bohnenblust--Hille inequalities]{A geometric technique to generate lower estimates for the constants in the Bohnenblust--Hille inequalities}

\author[G.A. Mu\~{n}oz-Fern\'{a}ndez, D. Pellegrino, J. Ramos Campos and J.B. Seoane-Sep\'{u}lveda]{G.A. Mu\~{n}oz-Fern\'{a}ndez, D. Pellegrino, J. Ramos Campos and J.B. Seoane-Sep\'{u}lveda}

\address{Departamento de An\'{a}lisis Matem\'{a}tico,\newline\indent Facultad de Ciencias Matem\'{a}ticas, \newline\indent Plaza de Ciencias 3, \newline\indent Universidad Complutense de Madrid,\newline\indent Madrid, 28040, Spain.}
\email{gustavo$\_$fernandez@mat.ucm.es}

\address{Departamento de Matem\'{a}tica, \newline\indent Universidade Federal da Para\'{\i}ba,
\newline\indent 58.051-900 - Jo\~{a}o Pessoa, Brazil.} \email{pellegrino@pq.cnpq.br and dmpellegrino@gmail.com}

\address{Departamento de Matem\'{a}tica, \newline\indent Universidade Federal da Para\'{\i}ba,
\newline\indent 58.051-900 - Jo\~{a}o Pessoa, Brazil.} \email{jamilsonrc@gmail.com}

\address{Departamento de An\'{a}lisis Matem\'{a}tico,\newline\indent Facultad de Ciencias Matem\'{a}ticas, \newline\indent Plaza de Ciencias 3, \newline\indent Universidad Complutense de Madrid,\newline\indent Madrid, 28040, Spain.}
\email{jseoane@mat.ucm.es}

\thanks{D. Pellegrino was supported by Supported by CNPq Grant 301237/2009-3, INCT-Matem\'{a}tica and CAPES-NF. G.A. Mu\~{n}oz-Fern\'{a}ndez and J. B. Seoane-Sep\'{u}lveda were supported by the Spanish Ministry of Science and Innovation, grant MTM2009-07848.}

\thanks{2010 Mathematics Subject Classification: 46G25, 47L22, 47H60.}

\keywords{Absolutely summing operators, Bohnenblust--Hille Theorem, Krein--Milman Theorem.}

\begin{abstract}
The Bohnenblust--Hille (polynomial and multilinear) inequalities were proved in 1931 in order to solve Bohr's absolute convergence problem on Dirichlet series. Since then these inequalities have found applications in various fields of analysis and analytic number theory. The control of the constants involved is crucial for applications, as it became evident in a recent outstanding paper of Defant, Frerick, Ortega-Cerd\'{a}, Ouna\"{\i}es and Seip published in 2011. The present work is devoted to obtain lower estimates for the constants appearing in the Bohnenblust--Hille polynomial inequality and some of its variants. The technique that we introduce for this task is a combination of the Krein--Milman Theorem with a description of the geometry of the unit ball of polynomial spaces on $\ell^2_\infty$.
\end{abstract}

\maketitle

\section{Preliminaries and background}

In 1913 H. Bohr proved that the maximal width $T$ of the vertical strip in
which a Dirichlet series $%
%TCIMACRO{\tsum \limits_{n=1}^{\infty}}%
%BeginExpansion
{\textstyle\sum\limits_{n=1}^{\infty}}
%EndExpansion
a_{n}n^{-s}$ converges uniformly but not absolutely is always less or equal
than $1/2$. Since then, the determination of the precise value of $T$ remained
a central problem in the study of Dirichlet series. Almost 20 years later, in
1931, H.F. Bohnenblust and E. Hille \cite{bh} showed that in fact $T=1/2.$ The technique used for this task was based on a puzzling generalization of Littlewood's
$4/3$ inequality to the framework of $m$-linear forms and homogeneous
polynomials.

The Bohnenblust--Hille inequality for homogeneous polynomials \cite{bh}
asserts that if $P:\ell_{\infty}^{N}\rightarrow\mathbb{C}$ is a $m$%
-homogeneous polynomial,
\[
P(z)={\textstyle\sum\limits_{\left\vert \alpha\right\vert =m}}a_{\alpha
}z^{\alpha},
\]
then there is a constant $D_{\mathbb{C},m}$ so that
\begin{equation}
\left(  {\textstyle\sum\limits_{\left\vert \alpha\right\vert =m}}\left\vert
a_{\alpha}\right\vert ^{\frac{2m}{m+1}}\right)  ^{\frac{m+1}{2m}}\leq
D_{\mathbb{C},m}\left\Vert P\right\Vert .\label{vvv}%
\end{equation}
The control of the estimates $D_{\mathbb{C},m},$ besides its challenging
nature, plays a decisive role in the theory: for instance, with adequate
estimates for $D_{\mathbb{C},m}$ in hands, Defant, Frerick, Ortega-Cerd\'{a},
Ouna\"{\i}es and Seip \cite{annals2011} were able to solve several important
questions related to Dirichlet series. In particular they obtained a
definitive generalization of a result of Boas and Khavinson \cite{boas},
showing that the $n$-dimension Bohr radius $K_{n}$ satisfies
\[
K_{n}\asymp\sqrt{\frac{\log n}{n}.}%
\]
The main result of \cite{annals2011} asserts that there is a $C>1$ such that
$D_{\mathbb{C},m}\leq C^{m}$ for all $m$, i.e., the Bohnenblust--Hille
inequality for homogeneous polynomials is hypercontractive. More precisely it
was shown that
\[
D_{\mathbb{C},m}\leq\left(  1+\frac{1}{m-1}\right)  ^{m-1}\sqrt{m}\left(
\sqrt{2}\right)  ^{m-1}%
\]
and, for example, one can take $C=2$ and it is simple to verify that
$D_{\mathbb{C},m}\leq2^{m}.$

It is worth mentioning that for small values of $m$, however, there are better estimates for $D_{\mathbb{C},m}$ due to Queff\'{e}lec \cite[Th. III-1]{Que}; for instance $D_{\mathbb{C},2}\leq1.7431$.

%Since the arguments used in the proof of the hypercontractivity of the
%Bohnenblust--Hille inequality for homogeneous polynomials seem to be confined
%to $\mathbb{C}$, we do not know if the corresponding version for real scalars
%is valid.

In view of the pivotal role played by the constants involved in the Bohnenblust--Hille inequality, a natural step forward is to try to obtain sharp constants and for this reason the search for lower estimates for the constants gains special importance. Moreover it is interesting to mention that, historically, the upper estimates obtained for the Bohnenblust--Hille inequalities have shown to be quite far from sharpness (see \cite{annals2011, psseo} for details).  Just to illustrate this fact, in the multilinear Bohnenblust--Hille inequality (complex case) the original upper estimate for the constant when $m=10$ is $80.28$ but now we know that this constant is not grater than $2.3$.

The multilinear version of Bohnenblust--Hille inequality is also an important subject of investigation in modern Functional Analysis and, as mentioned in \cite{defant}, \textquotedblleft \textit{it had and has deep applications in various fields of analysis, as for example in operator theory in Banach spaces, Fourier and harmonic analysis, complex analysis in finitely and infinitely many variables, and analytic number theory}\textquotedblright . For recent developments and related results we refer to \cite{DDGM,defant111,defant55,ff}.

Everything begins with Littlewood's famous $4/3$ theorem which asserts that for $\mathbb{K}=\mathbb{R}$ or
$\mathbb{C}$,
\[
\left(
{\displaystyle\sum\limits_{i,j=1}^{\infty}}
\left\vert A(e_{i},e_{j})\right\vert ^{\frac{4}{3}}\right)  ^{\frac{3}{4}}\leq
C_{\mathbb{K},2}\left\Vert A\right\Vert
\]
for every continuous bilinear form $A$ on $c_{0}\times c_{0}$, with
\[
C_{\mathbb{K},2}=\sqrt{2}.
\]
It is well-known that the power $4/3$ is optimal (see \cite{Ga}). For
real scalars it also can be shown that the constant $\sqrt{2}$ is optimal
(see \cite{Mu}). For complex scalars, however, there are several estimates for
$C_{\mathbb{C},2}$; below $K_{G}$ stands for the complex Grothendieck's
constant, and it is well-known that $1.338\leq K_{G}\leq1.405$ (see \cite{Di}):

\begin{itemize}
\item $C_{\mathbb{C},2}\leq\left(  K_{G}\sqrt{2}\right)  ^{1/2}$
(\cite[Theorem 34.11]{defant3} or \cite[Theorem 11.11]{TJ}),

\item $C_{\mathbb{C},2}\leq K_{G}$ (\cite[Corollary 2, p. 280]{LP}),

\item $C_{\mathbb{C},2}\leq\frac{2}{\sqrt{\pi}}\approx$ $1.128$
(\cite{defant2, Que}).
\end{itemize}

The optimal value for $C_{\mathbb{C},2}$ seems unknown. In 1931 Bohnenblust
and Hille \cite{bh} observed the connection between Littlewood's $4/3$ theorem and the so called Bohr's absolute convergence problem for Dirichlet series, which had been open for over 15 years. So, they generalized Littlewood's result to multilinear mappings, homogeneous polynomials and answered Bohr's problem.

Although the work of Bohnenblust and Hille is focused on complex scalars, it is
well-known that the result also holds for real scalars:

If $A$ is a continuous $n$-linear form on $c_{0}\times\cdots\times c_{0}$,
then there is a constant $C_{\mathbb{K},n}$ (depending only on $n$ and $\mathbb{K}$) such that%
\[
\left(
{\displaystyle\sum\limits_{i_{1},...,i_{m} = 1}^{\infty}}
\left\vert A(e_{i_{1}},...,e_{i_{n}})\right\vert ^{\frac{2n}{n+1}}\right)
^{\frac{n+1}{2n}}\leq C_{\mathbb{K},n}\left\Vert A\right\Vert .
\]

The estimates for $C_{\mathbb{K},n}$ were improved along the decades (see \cite{Davie, Ka, Que}). From recent
works (see \cite{Mu, psseo}) we know that, for real scalars,
\begin{align*}
C_{\mathbb{R},2}  &  =\sqrt{2}\approx1.414\\
1.587  &  \leq C_{\mathbb{R},3}\leq1.782\\
1.681  &  \leq C_{\mathbb{R},4}\leq2\\
1.741  &  \leq C_{\mathbb{R},5}\leq2.298\\
1.811  &  \leq C_{\mathbb{R},6}\leq2.520
\end{align*}
and, for the complex case,

\begin{align*}
C_{\mathbb{C},2}  &  \leq\left(  \frac{2}{\sqrt{\pi}}\right)  \approx1.128\\
C_{\mathbb{C},3}  &  \leq1.273\\
C_{\mathbb{C},4}  &  \leq1.437\\
C_{\mathbb{C},5}  &  \leq1.621\\
C_{\mathbb{C},10}  &  \leq2.292\\
C_{\mathbb{C},15}  &  \leq2.805.
\end{align*}
The lower bounds for $C_{\mathbb{R},m}$ obtained in \cite{Mu} are $2^{\frac{m-1}{m}%
}$, so the precise value for $C_{\mathbb{R},m}$ with ``big $m$'' is quite uncertain. Very recently, it was shown that for both real and complex scalars the
asymptotic behavior of the best values for $C_{\mathbb{K},n}$ is optimal \cite{DD}.

The (complex and real) Bohnenblust--Hille inequality can be re-written in the
context of multiple summing multilinear operators.

Let $X_{1},\ldots,X_{m}$ and $Y$ be Banach spaces over $\mathbb{K}=\mathbb{R}$
or $\mathbb{C}$, and $X^{\prime}$ be the topological dual of $X$. By
$\mathcal{L}(X_{1},\ldots,X_{m};Y)$ we denote the Banach space of all
continuous $m$-linear mappings from $X_{1}\times\cdots\times X_{m}$ to $Y$
with the usual sup norm. For $x_{1},...,x_{n}$ in $X$, let%

\[
\Vert(x_{j})_{j=1}^{n}\Vert_{w,1}:=\sup\{\Vert(\varphi(x_{j}))_{j=1}^{n}%
\Vert_{1}:\varphi\in X^{\prime},\Vert\varphi\Vert\leq1\}.
\]
If $1\leq p<\infty$, an $m$-linear mapping $U\in\mathcal{L}(X_{1},\ldots
,X_{m};Y)$ is multiple $(p;1)$-summing (denoted $\Pi_{(p;1)}(X_{1}%
,\ldots,X_{m};Y)$) if there exists a constant $U_{\mathbb{K},m}\geq0$ such
that
\begin{equation}
\left(  \sum_{j_{1},\ldots,j_{m}=1}^{N}\left\Vert U(x_{j_{1}}^{(1)}%
,\ldots,x_{j_{m}}^{(m)})\right\Vert ^{p}\right)  ^{\frac{1}{p}}\leq
U_{\mathbb{K},m}\prod_{k=1}^{m}\left\Vert (x_{j}^{(k)})_{j=1}^{N}\right\Vert
_{w,1} \label{lhs}%
\end{equation}
for every $N\in\mathbb{N}$ and any $x_{j_{k}}^{(k)}\in X_{k}$, $j_{k}%
=1,\ldots,N$, $k=1,\ldots,m$. The infimum of the constants satisfying
(\ref{lhs}) is denoted by $\left\Vert U\right\Vert _{\pi(p;1)}$. For $m=1$ we
recover the well-known concept of absolutely $(p;1)$-summing operators (see,
e.g. \cite{defant3, Di}).

The Bohnenblust--Hille inequality can be re-written in the context of multiple
summing multilinear operators in the following sense: every continuous
$m$-linear form $U:X_{1}\times\cdots\times X_{m}\rightarrow\mathbb{K}$ is
multiple $(\frac{2m}{m+1};1)$-summing. Moreover%
\begin{equation}
\left\Vert U\right\Vert _{\pi(\frac{2m}{m+1};1)}\leq C_{\mathbb{K}%
,m}\left\Vert U\right\Vert . \label{kko}%
\end{equation}
For details we refer to \cite{defant} and references therein.

From now on if $P:X\rightarrow Y$ is a $m$-homogeneous polynomial then $\overset{\vee}{P}$ denotes the (unique) symmetric $m$-linear map (also called the \textit{polar} of $P$) associated to $P$. Recall that an $m$-homogeneous polynomial $P:X\rightarrow Y$ is multiple
$(p;1)$-summing (denoted $\mathcal{P}_{(p;1)}(^{m}X;Y)$) if there exists a
constant $P_{\mathbb{K},m}\geq0$ such that%
\begin{equation}
\left(  \sum_{j_{1},\ldots,j_{m}=1}^{N}\left\Vert \overset{\vee}{P}(x_{j_{1}%
}^{(1)},\ldots,x_{j_{m}}^{(m)})\right\Vert ^{p}\right)  ^{\frac{1}{p}}\leq
P_{\mathbb{K},m}\prod_{k=1}^{m}\left\Vert (x_{j}^{(k)})_{j=1}^{N}\right\Vert
_{w,1} \label{lhs222}%
\end{equation}
for every $N\in\mathbb{N}$ and any $x_{j_{k}}^{(k)}\in X$, $j_{k}=1,\ldots,N$,
$k=1,\ldots,m$. The infimum of the constants satisfying (\ref{lhs222}) is
denoted by $\left\Vert P\right\Vert _{\pi(p;1)}$. Note that
\[
\left\Vert P\right\Vert _{\pi(p;1)}=\left\Vert \overset{\vee}{P}\right\Vert
_{\pi(p;1)}.%
\]

If $P\in\mathcal{P}(^{m}X;\mathbb{K})$ then $\overset{\vee}{P}\in
\mathcal{L}(^{m}X;\mathbb{K})=\Pi_{(\frac{2m}{m+1};1)}\left(  ^{m}%
X;\mathbb{K}\right)  $ and
\[
\left\Vert P\right\Vert _{\pi(\frac{2m}{m+1};1)}=\left\Vert \overset{\vee}%
{P}\right\Vert _{\pi(\frac{2m}{m+1};1)}\overset{\text{(\ref{kko})}}{\leq
}C_{\mathbb{K},m}\left\Vert \overset{\vee}{P}\right\Vert \leq\frac{m^{m}}%
{m!}C_{\mathbb{K},m}\left\Vert P\right\Vert .
\]
So, since $C_{\mathbb{K},m}$ does not depend on $X$ and $P$ we conclude that there
are constants $L_{\mathbb{K},m}$ (which does not depend on $X$ and $P$) such that%
\[
\left(  \sum_{j_{1},\ldots,j_{m}=1}^{N}\left\Vert \overset{\vee}{P}(x_{j_{1}%
}^{(1)},\ldots,x_{j_{m}}^{(m)})\right\Vert ^{p}\right)  ^{\frac{1}{p}}\leq
L_{\mathbb{K},m}\left\Vert P\right\Vert \prod_{k=1}^{m}\left\Vert (x_{j}%
^{(k)})_{j=1}^{N}\right\Vert _{w,1}.
\]

Note that if $X=\ell_{\infty}^{N},$ and $x^{(j)}=e_{j}$ for every $j=1,...,N$,
since
\[
\left\Vert (x^{(j)})_{j=1}^{N}\right\Vert _{w,1}=1,
\]
we have%
\begin{equation}
\left(  \sum_{j_{1},\ldots,j_{m}=1}^{N}\left\Vert \overset{\vee}{P}(e_{j_{1}%
},\ldots,e_{j_{m}})\right\Vert ^{\frac{2m}{m+1}}\right)  ^{\frac{m+1}{2m}}\leq
L_{\mathbb{K},m}\left\Vert P\right\Vert \label{fff}%
\end{equation}
for every $N\in\mathbb{N}$, which can be regarded as a kind of polynomial
Bohnenblust--Hille inequality.

Since (\ref{fff}) is confined to the
symmetric case, there is no obvious relation between the optimal values for
$C_{\mathbb{K},m}$ and the optimal values of $L_{\mathbb{K},m}.$

For $m=2$ it is well-known that $C_{\mathbb{R},2}=\sqrt{2}$. For $m>2$ the
precise values of $C_{\mathbb{R},m}$ are not known. Since
\[
L_{\mathbb{R},m}\leq\frac{m^{m}}{m!}C_{\mathbb{R},m},
\]
we have%
\begin{align*}
L_{\mathbb{R},2}  &  \leq2.828\\
L_{\mathbb{R},3}  &  \leq8.018\\
L_{\mathbb{R},4}  &  \leq21.333%\\
%L_{\mathbb{R},5}  &  \leq59.828.
\end{align*}

The main goal of this paper is to introduce a technique that helps to find nontrivial lower bounds for the constants involved in the Bohnenblust--Hille inequalities. Our approach is shown to be effective for the cases of $L_{\mathbb{R},m}$ and $D_{\mathbb{R},m}$. In the complex case we succeed in obtaining a lower bound for $D_{\mathbb{C},2}$.

More precisely, as a consequence of our estimates we show that if $D_{\mathbb{R},m}>0$ is such that
\[
\left(
{\textstyle\sum\limits_{\left\vert \alpha\right\vert =m}}
\left\vert a_{\alpha}\right\vert ^{\frac{2m}{m+1}}\right)  ^{\frac{m+1}{2m}%
}\leq D_{\mathbb{R},m}\left\Vert P\right\Vert ,
\]
for all $m$-homogeneous polynomial $P:\ell_{\infty}^{N}\rightarrow\mathbb{R}%
$,
\[
P(x)={\textstyle\sum\limits_{\left\vert \alpha\right\vert =m}}
a_{\alpha}x^{\alpha},
\]
then
\[
D_{\mathbb{R},m}\geq\left(  1.495\right)  ^{m}.\,
\]
Regarding to $L_{\mathbb{R},m},$ we show, for instance, that%
\begin{align*}
1.770  &  \leq L_{\mathbb{R},2}\\
1.453  &  \leq L_{\mathbb{R},3}\\
2.371  &  \leq L_{\mathbb{R},4}\\
3.272  &  \leq L_{\mathbb{R},8}\\
5.390  &  \leq L_{\mathbb{R},16}%
\end{align*}
%and we obtain numerical information suggesting that, for our estimates, $\frac{L_{\mathbb{R},4k}%
%}{L_{\mathbb{R},4(k-1)}}\rightarrow\frac{5}{4}$.

In the complex case we show that
$D_{\mathbb{C},2}\geq1.1066$. So, combining this information with the best known upper estimate known for $D_{\mathbb{C},2}$ we conclude that $$ 1.1066\leq D_{\mathbb{C},2}\leq1.7431.$$

The techniques used in this paper in order to obtain good estimates for the constants $L_{\mathbb{K},n}$ and $D_{\mathbb{K},n}$ are based on the following result:

\begin{theorem}[consequence of Krein--Milman Theorem]\label{remark}
If $C$ is a convex body in a Banach space and $f:C\rightarrow
{\mathbb R}$ is a convex function that attains its maximum, then
there is an extreme point $e\in C$ so that $f(e)=\max\{f(x):x\in
C\}$.
\end{theorem}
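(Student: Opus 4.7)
The plan is to exhibit an extreme point where $f$ achieves its maximum by first cutting $C$ down to the face of maximizers and then applying Krein--Milman to that face. Set $M:=\max\{f(x):x\in C\}$, which exists by hypothesis, and define
\[
F:=\{x\in C : f(x)=M\}.
\]
Since $f$ is convex and $M$ is the maximum value, $F$ is a non-empty convex subset of $C$; assuming the mild regularity present in the applications (the convex function $f$ is, in the settings used later in the paper, continuous on the finite-dimensional convex body $C$), $F$ is closed and hence compact.

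Next I would apply the Krein--Milman theorem to $F$ to obtain an extreme point $e$ of $F$. The remaining task is to upgrade this to an extreme point of the ambient body $C$. Suppose $e=\lambda x+(1-\lambda)y$ with $x,y\in C$ and $\lambda\in(0,1)$. Convexity of $f$ gives
\[
M = f(e)\leq \lambda f(x)+(1-\lambda)f(y)\leq M,
\]
so both inequalities must be equalities, forcing $f(x)=f(y)=M$, i.e., $x,y\in F$. Extremality of $e$ in $F$ then yields $x=y=e$, so $e$ is an extreme point of $C$ with $f(e)=M$, as required.

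An even more transparent route is available when $C$ is finite-dimensional, which is the case in every application in this paper. By Minkowski's theorem any maximizer $x^{*}\in C$ can be written as a finite proper convex combination $x^{*}=\sum_{i=1}^{k}\lambda_{i}e_{i}$ with $\lambda_{i}>0$, $\sum_{i}\lambda_{i}=1$ and $e_{i}$ extreme. Convexity then gives $M=f(x^{*})\leq\sum_{i}\lambda_{i}f(e_{i})\leq\max_{i}f(e_{i})\leq M$, so at least one $e_{i}$ already realises the maximum. The only genuine subtlety---and the main obstacle one must address in the general Banach-space formulation---is checking that the face $F$ is closed (so that Krein--Milman applies) and that an extreme point of $F$ remains extreme in $C$; the short convex-combination argument above settles precisely this point.
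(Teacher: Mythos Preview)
The paper states this result without proof, citing it as a consequence of Krein--Milman, so there is nothing to compare against beyond assessing your argument. Your finite-dimensional route via Minkowski's theorem is correct and suffices for every application that follows, all of which live in finite-dimensional polynomial spaces.

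Your first route, however, contains a real error: the maximizer set $F=\{x\in C:f(x)=M\}$ of a convex function is \emph{not} convex in general. Convexity of $f$ only gives $f(\lambda x+(1-\lambda)y)\le M$ for $x,y\in F$, with no matching lower bound; take $f(t)=t^{2}$ on $C=[-1,1]$, where $F=\{-1,1\}$. So Krein--Milman cannot be applied to $F$ as you propose. What your computation \emph{does} establish correctly is that $F$ is an extremal subset (a face) of $C$: any proper convex combination in $C$ landing in $F$ has both endpoints in $F$. Combined with compactness of $F$, this is enough to finish---for instance, pick an extreme point of the compact convex set $\overline{\operatorname{conv}}(F)$ and use Milman's partial converse to place it back in $F$, or run the standard Zorn argument on closed extremal subsets of $C$ contained in $F$---but that repair is a different argument from the one you wrote.
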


This consequence of the Krein--Milman Theorem (\cite{KM}) provides good lower estimates on the constants $L_{\mathbb{K},n}$ when it is combined with a description of the geometry of the unit ball of a polynomial space on $\ell^m_\infty$. The problem of finding the extreme points of the unit ball of a polynomial space has been largely studied in the past few years. In particular, the following results will be particularly useful for our purpose.

\begin{theorem}[Choi \& Kim \cite{Choi}]\label{the:ExtPoints}
The extreme points of the unit ball of ${\mathcal P}(^2\ell_\infty^2)$ are the polynomials of the form
    $$
    \pm x^2,\ \pm y^2,\ \pm(tx^2-ty^2\pm2\sqrt{t(1-t)}xy),
    $$
with $t\in[1/2,1]$.
\end{theorem}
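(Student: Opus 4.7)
Identify $\mathcal{P}(^2\ell_\infty^2)$ with $\mathbb{R}^3$ via $P(x,y)=ax^2+by^2+cxy\leftrightarrow(a,b,c)$. The unit ball $B$ becomes a three-dimensional symmetric convex body with the obvious symmetries $(a,b,c)\mapsto(b,a,c)$ (from $x\leftrightarrow y$) and $(a,b,c)\mapsto(a,b,-c)$ (from $x\mapsto-x$), and the plan is to describe $\partial B$ by computing $\|P\|$ explicitly, verifying the candidate list lies on $\partial B$, proving each candidate is extreme, and ruling out any other extreme points.

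The maximum of $|P|$ on $[-1,1]^2$ is attained either at a vertex, contributing $|a+b|+|c|$, or at an interior critical point on an edge. On $x=\pm 1$, the restriction $y\mapsto a+by^2\pm cy$ is critical at $y^*=\mp c/(2b)$ (lying in $(-1,1)$ iff $|c|<2|b|$) with value $a-c^2/(4b)$; the edges $y=\pm 1$ give the symmetric expression $b-c^2/(4a)$. Thus $\|P\|$ is the maximum of these three quantities (over the regions where they apply). Plugging in shows $\|\pm x^2\|=\|\pm y^2\|=1$ trivially, while $P_t:=tx^2-ty^2+2\sqrt{t(1-t)}\,xy$ satisfies $P_t(1,y^*)=1$ at $y^*=\sqrt{(1-t)/t}\in[0,1]$ and $P_t(x^*,-1)=-1$ at $x^*=\sqrt{(1-t)/t}$, while its vertex values are $\pm 2\sqrt{t(1-t)}$, both of absolute value at most $1$ for $t\in[1/2,1]$; hence $\|P_t\|=1$.

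To see each candidate is extreme, suppose $P=\tfrac{1}{2}(Q_1+Q_2)$ with $\|Q_i\|\leq 1$. At any point where $|P|=1$, both $Q_i$ must agree with $P$ in value there, and if that point is an interior critical point on an edge, then $Q_i$ must also be critical there along the edge (else $|Q_i|$ would exceed $1$ nearby). For $\pm x^2$, the value $\pm 1$ is attained along the entire edge $\{\pm 1\}\times[-1,1]$, forcing the restriction of $Q_i$ to that edge to be constant and hence $Q_i=\pm x^2$. For $P_t$ with $t\in(1/2,1)$, the peak data at the two edge-interior maxima $(1,y^*)$ and $(x^*,-1)$ give three independent linear constraints on the three coefficients of $Q_i$, pinning down $Q_i=P_t$; the boundary cases $t=1/2$ and $t=1$ are handled by the same argument, where one of the interior critical points merges with a vertex or the cross term vanishes.

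The real work is showing nothing else is extreme. Using the symmetries one may assume $c\geq 0$, and then case-split according to the set of points where $\|P\|$ is attained. If $P$ attains its norm only at vertices, the vertex condition $|a+b|+c=1$ describes a flat two-dimensional face of $B$, and within this face $P$ lies on a segment unless edge-interior constraints become tight simultaneously; pushing the analysis through leaves only $\pm x^2,\pm y^2$ and (from $a+b=0$, $c=1$) the boundary case $\pm P_{1/2}$. If the norm is attained at edge-interior critical points on a single family of edges, a one-parameter perturbation keeps $P$ inside $B$. Finally, if it is attained at interior critical points on both edge-families simultaneously, the resulting equations $a-c^2/(4b)=\pm 1$ and $b-c^2/(4a)=\mp 1$ force $a+b=0$ and $c^2=4a(1-a)$, recovering $P_t$ for $t\in[1/2,1]$. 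The main obstacle is this last case analysis: one must carefully track which inequalities are active, handle the boundary values $t=1/2$ and $t=1$ to avoid double-counting with the vertex face, and rule out same-sign combinations of the two edge-family conditions, which turn out to be inconsistent with $\|P\|=1$.
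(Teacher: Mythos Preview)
The paper does not contain a proof of this theorem: it is stated as a result of Choi and Kim and used as a black box in the subsequent estimates (for instance, in the computation of $L_{\mathbb{R},2}(\ell_\infty^2)$). There is therefore no in-paper argument to compare your attempt against.

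For what it is worth, your outline follows the natural direct route---compute $\|P\|$ explicitly via the edge/vertex analysis on $[-1,1]^2$, verify the listed polynomials lie on the sphere, and then run a case analysis on the active constraints to exclude everything else---which is essentially the strategy of the original Choi--Kim paper. The sketch is broadly sound, but two places would need more care before it counts as a proof: in the ``single edge-family'' case you assert that a one-parameter perturbation stays in $B$, which is true but requires identifying the explicit direction $(y^*x-y)^2$ and checking both the $+1$ and $-1$ level sets; and in the ``vertex-only'' case the locus $|a+b|+|c|=1$ is a union of planar faces whose edges (e.g.\ $a+b=0$, $c=1$, $|a|\le 1/2$, or $c=0$, $a+b=1$, $a\in[0,1]$) must be traced out to see that only their endpoints---and not just $\pm P_{1/2}$---survive, recovering $\pm x^2$, $\pm y^2$ as endpoints of the $c=0$ edges rather than from a separate argument.
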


\begin{theorem}[G\'{a}mez-Merino, Mu\~{n}oz-Fern\'{a}ndez, S\'{a}nchez, Seoane-Sep\'{u}lveda \cite{Gamez}]\label{the:square}
If ${\mathcal P}(^2\Box)$ denotes the space ${\mathcal P}(^2{\mathbb R}^2)$ endowed with the sup norm over the unit interval $\Box=[0,1]^2$ and ${\mathsf B}_\Box$ is its unit ball, then the extreme points of ${\mathsf B}_\Box$ are
    $$
    \pm (tx^2-y^2+2\sqrt{1-t}xy)\quad \text{and}\quad \pm
    (-x^2+ty^2+2\sqrt{1-t}xy)\quad \text{with  $t\in[0,1]$}
    $$
or
    $$
    \pm(x^2+y^2-xy),\ \pm(x^2+y^2-3xy),\ \pm x^2,\ \pm y^2.
    $$
\end{theorem}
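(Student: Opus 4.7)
The plan is to exploit the fact that $\mathcal{P}(^2\mathbb{R}^2)$ is three-dimensional, spanned by $x^2$, $y^2$, $xy$, so that $\mathsf{B}_\Box$ is a three-dimensional convex body whose extreme points can be analyzed by studying their \emph{norming sets} $N(P)=\{z\in\Box:|P(z)|=\|P\|\}$. The criterion I will use is the standard one: $P$ with $\|P\|=1$ is extreme if and only if there is no nonzero $Q\in\mathcal{P}(^2\mathbb{R}^2)$ with $\|P+Q\|\leq 1$ and $\|P-Q\|\leq 1$. Each $z\in N(P)$ forces $Q(z)=0$, and at each $z$ lying in the relative interior of an edge of $\Box$ the first-order optimality of $P$ along that edge also forces the tangential derivative of $Q$ to vanish. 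Since $\dim\mathcal{P}(^2\mathbb{R}^2)=3$, three independent linear conditions of this kind suffice to pin down $Q=0$; conversely, a short second-order argument (using that the restriction of $P$ to an edge has a nondegenerate Hessian at an interior maximum provided the coefficient of $y^2$ or $x^2$ is nonzero) shows that if fewer than three independent conditions are produced, then small multiples of an admissible $Q$ keep $P\pm tQ$ in $\mathsf{B}_\Box$, so that $P$ is not extreme.

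For the forward direction I would verify that each polynomial on the list has sup-norm $1$ and satisfies this criterion. The key observation is that any $P(x,y)=ax^2+by^2+cxy$ has no critical point in $\mathbb{R}^2$ other than the origin, so the maximum of $|P|$ on $\Box$ must occur on the boundary---either at one of the corners $(1,0)$, $(0,1)$, $(1,1)$ or at an interior critical point of the one-variable restrictions $P(1,\cdot)$ and $P(\cdot,1)$. For the family $\pm(tx^2-y^2+2\sqrt{1-t}\,xy)$ a direct computation identifies $N(P)=\{(0,1),\,(1,\sqrt{1-t})\}$, and the value and tangential-derivative conditions at the interior edge point $(1,\sqrt{1-t})$, combined with $Q(0,1)=0$, give three independent linear conditions on $Q$. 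The mirror family is treated by exchanging $x\leftrightarrow y$. The polynomials $\pm(x^2+y^2-xy)$ and $\pm(x^2+y^2-3xy)$ both norm at the three corners $(1,0)$, $(0,1)$, $(1,1)$ simultaneously, which supplies three independent point-evaluation conditions. Finally $\pm x^2$ and $\pm y^2$ norm on the full edges $x=1$ and $y=1$ respectively, which forces $Q$ to vanish on a whole line and hence to vanish identically.

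Conversely, given an arbitrary extreme $P=ax^2+by^2+cxy$, I would case-analyse the geometry of $N(P)$. If $N(P)$ contains a full edge of $\Box$, then, since a nonzero $2$-homogeneous polynomial cannot be constantly $\pm 1$ on $x=0$ or $y=0$, the norming edge must be $x=1$ or $y=1$, and evaluating the restriction gives $P=\pm x^2$ or $P=\pm y^2$. Otherwise $N(P)$ is a finite subset of the three nonzero corners together with the at-most-two candidate interior critical points $(1,-c/(2b))$ and $(-c/(2a),1)$. Since a corner contributes one linear condition and an interior edge point contributes two, the admissible configurations that yield three independent conditions are: the three corners; or one corner plus one interior edge point; or two interior edge points on different edges. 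For each configuration I would solve the resulting algebraic system, fixing the sign pattern at the norming points and the free parameter locating the interior critical point, and then verify the global bound $|P|\leq 1$ on all of $\Box$. The main obstacle is this final verification: many sign patterns solve the pointwise norming equations yet fail $\|P\|=1$ because $|P|$ overshoots elsewhere on $\Box$ (for example the corner pattern $a=1$, $b=-1$ with $a+b+c=\pm 1$ yields $\|P\|=5/4$, not $1$), so one must carefully discard these spurious branches. One also has to handle the degenerate endpoints $t=0,1$ of the parameter intervals in families (1) and (2), at which an interior edge point migrates to a corner and two configurations merge.
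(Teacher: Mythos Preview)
The paper does not contain a proof of this theorem. Theorem~\ref{the:square} is quoted verbatim from the preprint of G\'amez-Merino, Mu\~noz-Fern\'andez, S\'anchez and Seoane-Sep\'ulveda (reference \cite{Gamez}) and is used here only as a tool, in the same way that Theorem~\ref{the:ExtPoints} is quoted from Choi and Kim. There is therefore no ``paper's own proof'' against which to compare your proposal.

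That said, your outline follows the standard method for this kind of result in low-dimensional polynomial spaces: identify the norming set $N(P)$, count the linear constraints it imposes on a perturbation $Q$ (point evaluations plus tangential derivatives at interior edge maxima), and use the fact that $\dim\mathcal{P}(^2\mathbb{R}^2)=3$ to decide extremality. This is almost certainly the strategy of \cite{Gamez} as well, so methodologically you are on the right track. Your sketch is not yet a proof, however. In the converse direction you list three admissible configurations (three corners; one corner plus one interior edge point; two interior edge points), but you do not actually carry out the case ``two interior edge points on different edges'' and show that it either reduces to one of the listed families or is infeasible; nor do you verify that within the one-corner-plus-one-interior-point case the only surviving sign patterns are exactly those producing the families $\pm(tx^2-y^2+2\sqrt{1-t}\,xy)$ and their mirrors. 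You correctly flag the global verification $|P|\le 1$ on all of $\Box$ as the main obstacle, but that verification is precisely where the work lies, and it is absent from the proposal. Finally, your second-order nondegeneracy claim (that fewer than three independent conditions always allow a small admissible perturbation) needs care at the degenerate endpoints you mention, where the Hessian of the edge restriction can vanish.
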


Note that Theorem \ref{the:square} is a kind on non-symmetric version of Theorem \ref{the:ExtPoints} and will be specially important when we are estimating the constants for $m\geq4$.
\section{Estimates for $L_{\mathbb{R},m}$}

In order to deal with polynomials and their polars we will introduce some
notation and a few basic results.

If $\alpha=(\alpha_{1},\ldots,\alpha_{n})\in{\mathbb{N}%
}^{*}$ then we define $|\alpha|:=\alpha_{1}+\cdots+\alpha_{n}$ and
    \[
    \binom{m}{\alpha}:=\frac{m!}{\alpha_{1}!\cdots\alpha_{n}!},
    \]
for $|\alpha|=m\in{\mathbb N}^*$.
Also, $\mathbf{x}^{\alpha}$ stands for the monomial $x_{1}^{\alpha_{1}}\cdots x_{n}^{\alpha_{n}}$
for $\mathbf{x}=(x_{1},\ldots,x_{n})\in{\mathbb{K}}^{n}$.
Having all this in mind, a straightforward consequence of the multinomial
formula yields the following relationship between the coefficients of a
homogeneous polynomial and the polar of the polynomial.

\begin{lemma}\label{lem:Multinomial}
If $P$ is a homogeneous polynomial of degree $n$ on ${\mathbb{K}}^{n}$ given
by
\[
P(x_{1},\ldots,x_{n})=\sum_{|\alpha|=m}a_{\alpha}\mathbf{{x}^{\alpha}, }%
\]
and $L$ is the polar of $P$, then
\[
L(e_{1}^{\alpha_{1}},\ldots,e_{n}^{\alpha_{n}})=\frac{a_{\alpha}}{\binom
{m}{\alpha}},
\]
where $\{e_{1},\ldots,e_{n}\}$ is the canonical basis of ${\mathbb{K}}^{n}$
and $e_{k}^{\alpha_{k}}$ stands for $e_{k}$ repeated $\alpha_{k}$ times.
\end{lemma}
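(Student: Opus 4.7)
The plan is to unwind the definition of the polar and expand it in the canonical basis. By definition, $L$ is the unique symmetric $m$-linear form on $(\mathbb{K}^n)^m$ satisfying $L(x,\ldots,x)=P(x)$ for every $x\in\mathbb{K}^n$. Writing $x=\sum_{k=1}^n x_k e_k$ and applying multilinearity in each slot, one obtains
\[
P(x)=L(x,\ldots,x)=\sum_{i_1,\ldots,i_m=1}^n x_{i_1}\cdots x_{i_m}\,L(e_{i_1},\ldots,e_{i_m}).
\]

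The next step is to group the $m$-tuples $(i_1,\ldots,i_m)\in\{1,\ldots,n\}^m$ according to how many times each basis vector occurs. For a fixed multi-index $\alpha$ with $|\alpha|=m$, the number of tuples in which $e_k$ appears exactly $\alpha_k$ times (for each $k$) is the multinomial coefficient $\binom{m}{\alpha}$. By symmetry of $L$, every such tuple produces the same value $L(e_1^{\alpha_1},\ldots,e_n^{\alpha_n})$, and the corresponding product $x_{i_1}\cdots x_{i_m}$ equals $\mathbf{x}^{\alpha}$. Substituting this into the display above yields
\[
P(x)=\sum_{|\alpha|=m}\binom{m}{\alpha}L(e_1^{\alpha_1},\ldots,e_n^{\alpha_n})\,\mathbf{x}^{\alpha}.
\]

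Finally, since the monomials $\{\mathbf{x}^{\alpha}:|\alpha|=m\}$ are linearly independent in the space of $m$-homogeneous polynomials, comparing the above expansion with the given representation $P(x)=\sum_{|\alpha|=m}a_{\alpha}\mathbf{x}^{\alpha}$ forces $a_{\alpha}=\binom{m}{\alpha}L(e_1^{\alpha_1},\ldots,e_n^{\alpha_n})$, which is precisely the stated identity after dividing by $\binom{m}{\alpha}$.

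There is no genuine obstacle here; the statement is essentially the polarized multinomial formula, and the proof is a one-page bookkeeping argument. The only delicate point is the combinatorial step of grouping tuples by multi-index and invoking symmetry of $L$ to collapse all $\binom{m}{\alpha}$ permutations of $(e_1^{\alpha_1},\ldots,e_n^{\alpha_n})$ to a single value; I would make sure to state explicitly that this is where the symmetry of the polar is used.
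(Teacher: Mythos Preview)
Your proof is correct and is exactly the standard argument the paper has in mind: the paper does not give a proof at all but simply states that the lemma is ``a straightforward consequence of the multinomial formula,'' which is precisely the expansion-and-regrouping computation you carried out. There is nothing to add.
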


\begin{definition}
Let us call $d$ the dimension of the space of all $m$-homogeneous polynomials
on ${\mathbb{R}}^{n}$. For every $m,n\in{\mathbb N}$, we define $\Phi_{m,n}:{\mathbb R}^d\rightarrow{\mathbb R}$ as follows:
Take ${\bf a}\in{\mathbb R}^d$ and consider the the $m$-homogeneous polynomial $P_{\mathbf{a}%
}(\mathbf{x})=\sum_{|\alpha|=m}a_{\alpha}\mathbf{x}^{\alpha}$ whose coefficients are the coordinates of ${\bf a}$. In order to avoid redundancies, assume that ${\bf a}=(a_\alpha)$ where the coordinates are arranged according to the lexicographic order of the $\alpha$'s. Then if $L_{\mathbf{a}}$ is the polar of $P_{\mathbf{a}}$ we define
    $$
    \Phi_{m,n}(\mathbf{a})  :=\left[  \sum_{i_{1}+\cdots+i_{m}=m}\left|
    L_{\mathbf{a}} (e_{i_{1}},\ldots,e_{i_{m}})\right|  ^{\frac{2m}{m+1}}\right]
    ^{\frac{m+1}{2m}}.
    $$
\end{definition}

\begin{remark}
Notice that Lemma \ref{lem:Multinomial} allows us to write $\Phi_{m,n}$ as
    \begin{align}
    \Phi_{m,n}(\mathbf{a}) & =\left[  \sum_{\overset{\alpha=(\alpha_{1},\cdots,\alpha_{n})}{|\alpha|=m}%
    }\binom{m}{\alpha}\left|  L_{\mathbf{a}} (e_{1}^{\alpha_{1}},\ldots
    ,e_{n}^{\alpha_{n}})\right|  ^{\frac{2m}{m+1}}\right]  ^{\frac{m+1}{2m}%
    }\nonumber\\
    & = \left[ \sum_{|\alpha|=m}\binom{m}{\alpha}\left| \frac{a_{\alpha}}%
    {\binom{m}{\alpha}}\right|  ^{\frac{2m}{m+1}}\right]  ^{\frac{m+1}{2m}%
    }.\label{ali:Phi}%
    \end{align}

Also $\Phi_{m,n}$ is, essentially, the composition of the norm in $\ell_{\frac
{2m}{m+1}}^{d}$ with the natural isomorphism between ${\mathcal{L}}^{s}%
(^{m}{\mathbb{R}}^{n})$ and ${\mathcal{P}}(^{m}{\mathbb{R}}^{n})$. Therefore
$\Phi_{m,n}$ is convex and by virtue of Krein--Milman Theorem
\[
L_{{\mathbb{R}},m}\geq L_{{\mathbb{R}},m}\left(  \ell_{\infty}^{n}\right)
:=\sup\{\Phi_{m,n}(\mathbf{a}):\mathbf{a}\in{\mathsf{B}}_{{\mathcal{P}}%
(^{m}\ell_{\infty}^{n})}\}=\sup\{\Phi_{m,n}(\mathbf{a}):\mathbf{a}%
\in\ext({\mathsf{B}}_{{\mathcal{P}}(^{m}\ell_{\infty}^{n})})\},
\]
where $\ext({\mathsf{B}}_{{\mathcal{P}}(^{m}\ell_{\infty}^{n})})$ is the set
of extreme points of ${\mathsf{B}}_{{\mathcal{P}}(^{m}\ell_{\infty}^{n})}$.
Observe that even in the case where the geometry of ${\mathsf{B}%
}_{{\mathcal{P}}(^{m}\ell_{\infty}^{n})}$ is not know, the mapping $\Phi_{m,n}$
provides a lower bound for $L_{{\mathbb{R}},m}$, namely
\begin{equation}
L_{{\mathbb{R}},m}\geq\frac{\Phi_{m,n}(\mathbf{a})}{\Vert P_{\mathbf{a}}\Vert
},\label{eq:lowe_bound}%
\end{equation}
for all $\mathbf{a}\in{\mathbb{R}}^{d}$.
\end{remark}

In
the following we will try to use the fact that the extreme points of
${\mathsf{B}}_{{\mathcal{P}}(^{m}\ell_{\infty}^{n})}$ have been characterized
for some choices of $m$ and $n$ (see for instance Theorem \ref{the:ExtPoints}%
).

\subsection{Case $m=2$}

\label{sub:Casem2}

We begin by illustrating that even sharp information for lower estimates for
$C_{\mathbb{R},2}$ may be useless for evaluating lower estimates for
$L_{\mathbb{R},2}.$ For instance, if $m=2$ in the multilinear
Bohnenblust--Hille inequality (in fact, Littlewood's $4/3$ inequality) the
best constant is $C_{\mathbb{R},2}=\sqrt{2}$ and this estimate is achieved
(see \cite{Mu}) when we use the bilinear form $T_{2}:\ell_{\infty}^{2}%
\times\ell_{\infty}^{2}\rightarrow\mathbb{R}$ given by
\[
T_{2}(x,y)=x_{1}y_{1}+x_{1}y_{2}+x_{2}y_{1}-x_{2}y_{2}.
\]
Note that $T_{2}$ is symmetric and the polynomial associated to $T_{2}$ is
$P_{2}:\ell_{\infty}^{2}\rightarrow\mathbb{R}$ given by%
\[
P_{2}(x)=x_{1}^{2}+2x_{1}x_{2}-x_{2}^{2}.
\]
Since $\left\Vert P_{2}\right\Vert =\left\Vert T_{2}\right\Vert =2,$ the
constant $L_{\mathbb{R},2}$ that appears for this choice of $P_{2}$ is again
$\sqrt{2}$, which is far from being a good lower estimate, as we shall see in
the next result, that gives the exact value for the constant $L_{{\mathbb R},2}(\ell_\infty^2)$.

\begin{theorem}\label{the:L_R,2}
    $
    L_{{\mathbb{R}},2}\geq    1.7700.
    $   More precisely,

    $$
    L_{{\mathbb{R}},2}\left(  \ell_{\infty}^{2}\right) =\sup\left\{  \left[  2t^{\frac{4}{3}}+2\left(  \sqrt{t(1-t)}\right)
    ^{\frac{4}{3}}\right]  ^{\frac{3}{4}}:t\in\lbrack1/2,1]\right\}\approx1.7700
    $$
and the supremum is attained at $t_{0}\approx0.9147$.
\end{theorem}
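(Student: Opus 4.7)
The plan is to specialize the Krein--Milman reduction in the remark following the definition of $\Phi_{m,n}$ to $m=n=2$: because $\Phi_{2,2}$ is continuous and convex, Theorem~\ref{remark} gives
$$L_{\mathbb{R},2}(\ell_\infty^2) = \sup\{\Phi_{2,2}(\mathbf{a}) : \mathbf{a} \in \ext(\mathsf{B}_{\mathcal{P}(^2\ell_\infty^2)})\},$$
and Theorem~\ref{the:ExtPoints} describes these extreme points explicitly. Thus the whole problem collapses to evaluating $\Phi_{2,2}$ on the trivial extreme points $\pm x^2, \pm y^2$ and on the one-parameter family $Q_{t,\varepsilon,\delta}(x,y) := \delta(tx^2 - ty^2 + 2\varepsilon\sqrt{t(1-t)}\,xy)$ with $t\in[1/2,1]$ and $\varepsilon,\delta\in\{\pm 1\}$, and then optimizing.

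Next I would carry out the evaluations. On each of $\pm x^2, \pm y^2$ exactly one monomial coefficient is nonzero, of modulus $1$, so formula~(\ref{ali:Phi}) together with Lemma~\ref{lem:Multinomial} immediately gives $\Phi_{2,2}=1$. For $Q_{t,\varepsilon,\delta}$, only the moduli of the coefficients enter $\Phi_{2,2}$, so $\varepsilon$ and $\delta$ drop out; inserting $|a_{(2,0)}|=|a_{(0,2)}|=t$, $|a_{(1,1)}|=2\sqrt{t(1-t)}$ and the binomial coefficients $\binom{2}{(2,0)}=\binom{2}{(0,2)}=1$, $\binom{2}{(1,1)}=2$ into~(\ref{ali:Phi}) yields
$$\Phi_{2,2}(Q_{t,\varepsilon,\delta}) = \left[2 t^{4/3} + 2\bigl(\sqrt{t(1-t)}\bigr)^{4/3}\right]^{3/4}.$$
A quick check of the endpoints ($t=1/2$ gives $\sqrt{2}$, $t=1$ gives $2^{3/4}$) shows this expression already dominates the value $1$ coming from $\pm x^2, \pm y^2$, so the supremum over all extreme points is the supremum of this single one-variable function; this is precisely the formula stated in the theorem.

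Finally I would maximize $g(t) := [2t^{4/3} + 2(t(1-t))^{2/3}]^{3/4}$ on $[1/2,1]$. I would differentiate $f(t) := g(t)^{4/3}$ to obtain
$$f'(t) = \tfrac{8}{3}t^{1/3} + \tfrac{4}{3}(1-2t)\bigl(t(1-t)\bigr)^{-1/3},$$
and after clearing cube roots the equation $f'(t)=0$ reduces to the cubic
$$16t^3 - 20t^2 + 6t - 1 = 0,$$
whose unique root in $[1/2,1]$ is $t_0 \approx 0.9147$ (the remaining roots lie outside this interval). Together with the endpoint values $\sqrt{2}$ and $2^{3/4}$ computed above, this identifies $t_0$ as the global maximizer on $[1/2,1]$, and numerical evaluation gives $g(t_0) \approx 1.7700$. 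Combined with the general inequality $L_{\mathbb{R},2} \geq L_{\mathbb{R},2}(\ell_\infty^2)$ recorded in the remark, this delivers $L_{\mathbb{R},2} \geq 1.7700$. The only non-routine part of the argument is verifying the numerical bound at $t_0$ to four-decimal precision; the conceptual work is done by Theorems~\ref{remark} and \ref{the:ExtPoints}, which together turn an abstract supremum over an infinite-dimensional unit ball into a one-variable calculus problem.
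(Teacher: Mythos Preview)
Your proof is correct and follows essentially the same approach as the paper: both reduce the computation of $L_{\mathbb{R},2}(\ell_\infty^2)$ via the Krein--Milman theorem to evaluating $\Phi_{2,2}$ on the Choi--Kim extreme points, discard the trivial extreme points, and arrive at the same one-variable supremum. The only difference is that you go further in the optimization step---explicitly deriving the cubic $16t^3-20t^2+6t-1=0$ and checking endpoints---whereas the paper simply states that the maximization is hard and reports the numerical values obtained from a computer algebra system.
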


\begin{proof}
Observe that for polynomials in ${\mathcal{P}}(^{2}\ell_{\infty}^{2})$ of the
form $P_{\mathbf{a}}(x,y)=ax^{2}+by^{2}+cxy$ with $\mathbf{a}=(a,b,c)$ we
have
\begin{equation}
\Phi_{2,2}(a,b,c)=\left[  a^{\frac{4}{3}}+b^{\frac{4}{3}}+2\left(  \frac{c}%
{2}\right)  ^{\frac{4}{3}}\right]  ^{\frac{3}{4}}.\label{equ:Phi_22}%
\end{equation}
Using the Krein--Milman approach
    \[
    L_{{\mathbb{R}},2}\left(  \ell_{\infty}^{2}\right)  =\sup\{\Phi_{22}%
    (\mathbf{a}):\mathbf{a}\in\ext({\mathsf{B}}_{{\mathcal{P}}(^{2}\ell_{\infty
    }^{2})})\}.
    \]
Now, by Theorem \ref{the:ExtPoints},
$\ext({\mathsf{B}}_{{\mathcal{P}}(^{2}\ell_{\infty}^{2})})$ consists of the polynomials
    \[
    \pm(1,0,0),\quad\pm(0,1,0)\quad\text{and}\quad\pm(t,-t,\pm2\sqrt{t(1-t)}),
    \]
with $t\in\lbrack1/2,1]$. Since the contribution of $\pm(1,0,0)$ and
$\pm(0,1,0)$ to the supremum is irrelevant, we end up with
    \begin{align*}
    L_{{\mathbb{R}},2}\left(  \ell_{\infty}^{2}\right)   &  =\sup\{\Phi_{2,2}%
    (\pm(t,-t,\pm2\sqrt{t(1-t)})):t\in\lbrack1/2,1]\}\\
    &  =\sup\left\{  \left[  2t^{\frac{4}{3}}+2\left(  \sqrt{t(1-t)}\right)
    ^{\frac{4}{3}}\right]  ^{\frac{3}{4}}:t\in\lbrack1/2,1]\right\}.
    \end{align*}
The problem of maximizing explicitly this function is a hard one and the final
result is far from being good looking. The interested reader can obtain an
explicit solution in radical form using a variety of symbolic calculus packages, such as
Mathematica, Matlab or Maple. A 4-digit approximation yields
    \[
    L_{{\mathbb{R}},2}\geq L_{{\mathbb{R}},2}\left(  \ell_{\infty}^{2}\right)
    \approx1.7700,
    \]
where the maximum is attained at $t_{0}\approx0.9147$.
\end{proof}

\begin{remark}
A very good approximation of $L_{{\mathbb{R}},2}\left(  \ell_{\infty}^{2}\right)  $ can be obtained considering the polynomial
    \[
    P_{\mathbf{{a}}}(x,y)=x^{2}-y^{2}+xy,
    \]
i.e., $\mathbf{a}=(1,-1,1)$. It is easy to check that $\Vert P_{\mathbf{a}%
}\Vert=5/4$. Hence, using \eqref{eq:lowe_bound} we have
    \[
    L_{\mathbb{R},2}\left(  \ell_{\infty}^{2}\right)  \geq\frac{\Phi_{2,2}%
    (1,-1,1)}{\Vert P_{\mathbf{a}}\Vert}=\frac{4}{5}\cdot\left(  2+2\left(
    \frac{1}{2}\right)  ^{4/3}\right)  ^{3/4}\approx1.728.
    \]
\end{remark}

\subsection{Case $m=4$}\label{sub:Casem4}

In this section we calculate the exact value of $L_{{\mathbb R},4}$ in a subspace of ${\mathcal L}^s(^4\ell_\infty^2)$.
Observe that the value of $L_{{\mathbb R},4}$ in a subspace  is, obviously, a lower bound for
$L_{{\mathbb{R}},4}$.

\begin{theorem}
If $E=\{ax^{4}+by^{4}+cx^{2}y^{2}:a,b,c\in{\mathbb{R}}\}$  and $\overset{\vee}{E}$ is the space of polars of elements in $E$ endowed with the sup norm over the unit ball of $\ell_\infty^2$, then
    $$
    L_{{\mathbb R},4}(\overset{\vee}{E})=\left[  2+6\left(\frac{1}{2}\right)^{\frac{8}{5}%
    }\right]^{\frac{5}{8}}\approx2.371.
    $$
In particular $$L_{{\mathbb R},4}\geq L_{{\mathbb R},4}(\ell_\infty^2)\geq L_{{\mathbb R},4}(\overset{\vee}{E})\approx2.371.$$ Moreover, equality is attained in the Bohnenblust-Hille inequality in $\overset{\vee}{E}$
for the polars of the polynomials $P(x,y)=\pm
(x^{4}-y^{4}+3xy)$.
\end{theorem}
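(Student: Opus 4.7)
The plan is to follow the Krein--Milman strategy from the proof of Theorem \ref{the:L_R,2}, but working inside the three-dimensional subspace $E$ and invoking Theorem \ref{the:square} in place of Theorem \ref{the:ExtPoints}. The crucial observation enabling this is an isometric identification of $E$ with $\mathcal{P}(^2\Box)$: every $P(x,y) = ax^4 + by^4 + cx^2y^2$ in $E$ depends only on $x^2$ and $y^2$, so the substitution $u = x^2,\ v = y^2$ associates to $P$ the $2$-homogeneous polynomial $Q(u,v) = au^2 + bv^2 + cuv$ with
\[
\|P\|_{\ell_\infty^2} = \sup_{(x,y) \in [-1,1]^2}|P(x,y)| = \sup_{(u,v)\in [0,1]^2}|Q(u,v)| = \|Q\|_\Box.
\]
First I would check that $P\mapsto Q$ is a linear isometry $E \to \mathcal{P}(^2\Box)$, so that the extreme points of $\mathsf{B}_E$ are precisely the preimages of the extreme points of $\mathsf{B}_\Box$ catalogued by Theorem \ref{the:square}.

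Next I would specialize formula \eqref{ali:Phi} to $m=4$, $n=2$, where only $\alpha\in\{(4,0),(0,4),(2,2)\}$ appears (with multinomial coefficients $1,1,6$ respectively) and $\tfrac{2m}{m+1}=8/5$, obtaining
\[
\Phi_{4,2}(a,b,c) = \left[|a|^{8/5} + |b|^{8/5} + 6^{-3/5}|c|^{8/5}\right]^{5/8}.
\]
This is a weighted $\ell_{8/5}$-norm in the coefficients, hence convex, so Theorem \ref{remark} reduces $L_{\mathbb{R},4}(\overset{\vee}{E})$ to the maximum of $\Phi_{4,2}$ over $\ext(\mathsf{B}_E)$, a finite collection of points together with two one-parameter families.

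A routine evaluation at each extreme point would then show: $\pm x^4,\pm y^4$ contribute $1$; $\pm(x^4+y^4-x^2y^2)$ contributes $(2+6^{-3/5})^{5/8}$; and $\pm(x^4+y^4-3x^2y^2)$ contributes $\bigl(2 + 6^{-3/5}\cdot 3^{8/5}\bigr)^{5/8}$, which via the algebraic identity $6^{-3/5}\cdot 3^{8/5} = 3\cdot 2^{-3/5} = 6\cdot 2^{-8/5}$ coincides with the announced $\bigl[2 + 6(1/2)^{8/5}\bigr]^{5/8}\approx 2.371$. The two parametric families $\pm(tx^4 - y^4 + 2\sqrt{1-t}\,x^2y^2)$ and $\pm(-x^4 + ty^4 + 2\sqrt{1-t}\,x^2y^2)$, $t\in[0,1]$, both produce the same function
\[
g(t) = 1 + t^{8/5} + 6^{-3/5}\cdot 2^{8/5}(1-t)^{4/5}, \qquad t\in[0,1],
\]
and one must verify the pointwise bound $g(t)\leq 2 + 6^{-3/5}\cdot 3^{8/5}$.

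I expect this last inequality on $g$ to be the main obstacle, though a mild one: $g$ is smooth on $(0,1)$ with $g'(t)=\tfrac{8}{5}t^{3/5}-\tfrac{4}{5}\cdot 6^{-3/5}\cdot 2^{8/5}(1-t)^{-1/5}$ admitting a single interior critical point, so the bound follows from a one-variable comparison of $g$ at that critical point and at the endpoints $t=0,1$. Once it is in place, the supremum is attained at $\pm(x^4+y^4-3x^2y^2)$, yielding simultaneously the claimed value of $L_{\mathbb{R},4}(\overset{\vee}{E})$ and the identification of the extremizers; the chain $L_{\mathbb{R},4}\geq L_{\mathbb{R},4}(\ell_\infty^2)\geq L_{\mathbb{R},4}(\overset{\vee}{E})$ is then automatic from the definitions.
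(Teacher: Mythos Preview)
Your proposal is correct and follows essentially the same route as the paper: identify $E$ isometrically with $\mathcal{P}(^2\Box)$ via $(x^2,y^2)\mapsto(u,v)$, apply Krein--Milman with the extreme points from Theorem~\ref{the:square}, and evaluate the convex functional $\Phi_{4,2}$ at each, finding the maximum at $\pm(x^4+y^4-3x^2y^2)$. One minor slip: $g'$ actually vanishes at \emph{two} interior points (the equation $t^3(1-t)=1/27$ has two roots in $(0,1)$), giving a local minimum and a local maximum, but the comparison still goes through easily and the paper itself simply asserts the maximum without detailing this calculus.
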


\begin{proof}
We just need to calculate
the maximum of $\Phi_{4,2}$ over $E$,
which is trivially isometric to the space ${\mathcal{P}}(^{2}\Box)$ (see
Theorem \ref{the:square} for the definition of ${\mathcal{P}}(^{2}\Box)$). If
$\Phi=\Phi_{4,2}|_{{\mathcal{P}}(^{2}\Box)}$, then $\Phi$ is obviously convex
and we have
\begin{align*}
L_{{\mathbb{R}},4}  &  \geq L_{{\mathbb{R}},4}\left(  \ell_{\infty}%
^{2}\right)  =\sup\{\Phi_{4,2}(\mathbf{a}):\mathbf{a}\in{\mathsf{B}%
}_{{\mathcal{P}}(^{4}\ell_{\infty}^{2})}\}\\
&  \geq\sup\{\Phi(\mathbf{a}):\mathbf{a}\in{\mathsf{B}}_{{\mathcal{P}}%
(^{2}\Box)}\}\\
&  =\sup\{\Phi(\mathbf{a}):\mathbf{a}\in\ext({\mathsf{B}}_{{\mathcal{P}}%
(^{2}\Box)})\},
\end{align*}
where the last equality is due to the Krein--Milman Theorem. Now by
\eqref{ali:Phi} we have
\[
\Phi(a,b,c)=\left[  a^{\frac{8}{5}}+b^{\frac{8}{5}}+6\left(  \frac{c}%
{6}\right)  ^{\frac{8}{5}}\right]  ^{\frac{5}{8}}.
\]
Using Theorem \ref{the:square} we obtain
\begin{align*}
\sup\{\Phi(\mathbf{a}):\mathbf{a}\in &  {\mathsf{B}}_{{\mathcal{P}}(^{2}\Box
)}\}\\
&  =\max\left\{  \left[  1+t^{\frac{8}{5}}+6\left(  \frac{\sqrt{1-t}}%
{3}\right)  ^{\frac{8}{5}}\right]  ^{\frac{5}{8}},\left[  2+6\left(  \frac
{1}{6}\right)  ^{\frac{8}{5}}\right]  ^{\frac{5}{8}},\left[  2+6\left(
\frac{1}{2}\right)  ^{\frac{8}{5}}\right]  ^{\frac{5}{8}}:t\in\lbrack
0,1]\right\} \\
&  =\left[  2+6\left(  \frac{1}{2}\right)  ^{\frac{8}{5}}\right]  ^{\frac
{5}{8}}.
\end{align*}
Observe that the maximum is attained at the polynomials $P(x,y)=\pm
(x^{4}-y^{4}+3xy)$. Hence we have proved that
\[
L_{{\mathbb{R}},4}\geq\left[  2+6\left(  \frac{1}{2}\right)  ^{\frac{8}{5}%
}\right]  ^{\frac{5}{8}}\approx2.371,
\]
moreover, a better (bigger) lower estimate for $L_{{\mathbb{R}},4}$ cannot be
obtained by considering polynomials of the form $ax^{4}+by^{4}+cx^{2}y^{2}$
with $a,b,c\in{\mathbb{R}}$.
\end{proof}

%%%%%%%%%%%%%%%%%%%%%%%%%%%%%%%%%%

\subsection{Higher values of $m$}

The previous sections allow us to obtain lower estimates for $L_{{\mathbb R},m}$ for arbitrary large $m$'s. In this section we consider polynomials of the form $P_{2k}(x,y)=(ax^2+by^2+cxy)^k$. In the following, if $h\in{\mathbb Z}$,
$\left\lfloor h\right\rfloor $ denotes the biggest integer $H$ so that $H\leq
h$.

\begin{proposition}\label{pro:A_j}
If $P_{2k}(x,y)=(ax^2+by^2+cxy)^k$, then $P_{2k}(x,y)=\sum_{j=0}^{2k}A_jx^jy^{2k-j}$ with
    \begin{equation}\label{equ:Aj}
    A_{j}=\sum_{\ell=0}^{\left\lfloor \frac{j}{2}\right\rfloor }\frac
    {k!a^\ell b^{k-j+\ell}c^{j-2\ell}}{\ell!(j-2\ell)!(k-j+\ell)!},
    \end{equation}
for $j=0,\ldots,2k$.
\end{proposition}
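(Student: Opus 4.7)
The plan is to apply the multinomial theorem directly to $(ax^2+by^2+cxy)^k$ and then collect terms of the same bidegree in $x$ and $y$. Since every summand of the multinomial expansion is already a monomial in $x$ and $y$, identifying the coefficient $A_j$ of $x^j y^{2k-j}$ reduces to combinatorial bookkeeping of the three summation indices.

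More specifically, the multinomial theorem produces a sum indexed by triples $(\ell_1,\ell_2,\ell_3)$ of nonnegative integers with $\ell_1+\ell_2+\ell_3=k$; each triple contributes $\frac{k!}{\ell_1!\,\ell_2!\,\ell_3!}\,a^{\ell_1}b^{\ell_2}c^{\ell_3}$ to the coefficient of $x^{2\ell_1+\ell_3}y^{2\ell_2+\ell_3}$. To extract the coefficient of $x^jy^{2k-j}$ I would solve the system $2\ell_1+\ell_3=j$ and $2\ell_2+\ell_3=2k-j$; adding these two equations recovers the multinomial constraint $\ell_1+\ell_2+\ell_3=k$, so they contribute only one independent new relation. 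Setting $\ell:=\ell_1$ one reads off $\ell_3=j-2\ell$ and $\ell_2=k-j+\ell$, and substituting these back into the multinomial summand yields exactly $\frac{k!\,a^\ell b^{k-j+\ell}c^{j-2\ell}}{\ell!\,(j-2\ell)!\,(k-j+\ell)!}$.

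The final step is to pin down the admissible range of the surviving index $\ell$. The conditions $\ell_1\geq 0$ and $\ell_3\geq 0$ immediately yield $0\leq\ell\leq\lfloor j/2\rfloor$, which is exactly the range appearing in the statement. The remaining condition $\ell_2\geq 0$ is automatic when $j\leq k$; when $j>k$ it forces $\ell\geq j-k$, and the terms with smaller $\ell$ are simply absent from the true expansion. This is consistent with (\ref{equ:Aj}) under the standard convention $1/n!=0$ for negative integers $n$, so that the factor $1/(k-j+\ell)!$ kills the spurious terms.

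The argument is essentially mechanical, so there is no genuine obstacle to overcome. The only point requiring care is the interpretation of $1/(k-j+\ell)!$ when $j>k$ and $\ell<j-k$, which is resolved either by the negative-factorial convention above or, equivalently, by replacing the lower limit of summation by $\max(0,j-k)$; both give the same numerical value for $A_j$.
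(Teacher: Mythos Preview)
Your proof is correct and follows essentially the same route as the paper: apply the multinomial theorem, match exponents to solve for the indices in terms of a single parameter $\ell=\alpha_1$, and read off the range $0\le\ell\le\lfloor j/2\rfloor$. Your extra remark about the constraint $\ell\ge j-k$ when $j>k$ (handled via the convention $1/(k-j+\ell)!=0$ for negative argument) is a point the paper passes over in silence, so if anything your argument is slightly more careful.
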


\begin{proof}
Using the multinomial formula:
    $$
    P_{2k}(x,y)=(ax^2+by^2+cxy)^{k}=\sum_{\overset{\alpha_{1}+\alpha
    _{2}+\alpha_{3}=k}{\alpha_{1},\alpha_{2},\alpha_{3}\geq 0}}\frac{k!}{\alpha_{1}!\alpha_{2}!\alpha_{3}!}a^{\alpha_1}b^{\alpha_2}c^{\alpha_3}x^{2\alpha_{1}+\alpha_{3}}%
    y^{2\alpha_{2}+\alpha_{3}}.
    $$
Therefore, $x^{j}y^{2n-j}=x^{2\alpha_{1}+\alpha_{3}}y^{2\alpha_{2}%
+\alpha_{3}}$ for $j=1,\ldots,2k$ implies that
    $$
    \begin{cases}
    & 2\alpha_{1}+\alpha_{3}=j,\\
    & 2\alpha_{2}+\alpha_{3}=2k-j,
    \end{cases}
    $$
which, together with the fact that $\alpha_{1}+\alpha_{2}+\alpha_{3}=k$ and
$\alpha_{1},\alpha_{2},\alpha_{3}\geq0$ yield
    $$
    \begin{cases}
    & \alpha_{3}=j-2\alpha_{1},\\
    & \alpha_{2}=k-j+\alpha_{1},
    \end{cases}
    $$
with $\alpha_{1}=0,\ldots,\left\lfloor \frac{j}{2}\right\rfloor $. As a result of the previous comments, the coefficient $A_j$ is given by \eqref{equ:Aj}.
\end{proof}

\begin{corollary}
If $k\in{\mathbb N}$ then
    \begin{align}
    L_{{\mathbb R},2k}\geq     \left[\sum_{j=0}^{2k}\binom{2k}{j}\left|\frac{A_j}{\binom
    {2k}{j}}\right|^{\frac{4k}{2k+1}}\right]^{\frac
    {2k+1}{4k}},\label{ali:even}
    \end{align}
where
    $$
    A_{j}=\sum_{\ell=0}^{\left\lfloor \frac{j}{2}\right\rfloor }\frac
    {k!(-1)^{k-j+\ell}t_0^{k-j+2\ell}(2\sqrt{t_0(1-t_0)})^{j-2\ell}}{\ell!(j-2\ell)!(k-j+\ell)!},    $$
for $j=0,\ldots,2k$ and $t_0$ is as in Theorem \ref{the:L_R,2}.
\end{corollary}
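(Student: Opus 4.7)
The plan is to apply the lower estimate \eqref{eq:lowe_bound} with the explicit test polynomial $P_{2k}(x,y) := P_2(x,y)^k$, where
$$P_2(x,y) = t_0 x^2 - t_0 y^2 + 2\sqrt{t_0(1-t_0)}\, xy$$
is the extremal quadratic polynomial singled out in the proof of \thmref{the:L_R,2}. Since $P_2$ lies in $\ext({\mathsf B}_{\mathcal{P}(^2\ell_\infty^2)})$, it has sup norm $1$ on $\ell_\infty^2$, and the sup norm is multiplicative on powers, so
$$\|P_{2k}\| = \sup_{\|(x,y)\|_\infty\leq 1} |P_2(x,y)|^k = \|P_2\|^k = 1.$$
Thus $P_{2k}$ is a norm-one $2k$-homogeneous polynomial on $\ell_\infty^2$ that will serve as our test function.

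Next, I would read off the monomial coefficients of $P_{2k}$ using \thmref{pro:A_j} (Proposition \ref{pro:A_j}) with the specific choice $(a,b,c) = (t_0, -t_0, 2\sqrt{t_0(1-t_0)})$. Substituting these values into \eqref{equ:Aj} gives
$$a^\ell b^{k-j+\ell} c^{j-2\ell} = (-1)^{k-j+\ell}\, t_0^{k-j+2\ell}\, \bigl(2\sqrt{t_0(1-t_0)}\bigr)^{j-2\ell},$$
so the $A_j$'s are precisely the numbers in the statement of the corollary, and $P_{2k}(x,y) = \sum_{j=0}^{2k} A_j x^j y^{2k-j}$.

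Finally, I would specialize the general formula \eqref{ali:Phi} to $m = 2k$ and $n = 2$: the multi-indices $\alpha$ of weight $2k$ in two variables are exactly $(j, 2k-j)$ for $j=0,\ldots,2k$, with multinomial coefficient $\binom{2k}{j}$. Therefore
$$\Phi_{2k,2}(A_0,\ldots,A_{2k}) = \left[\sum_{j=0}^{2k} \binom{2k}{j} \left|\frac{A_j}{\binom{2k}{j}}\right|^{\frac{4k}{2k+1}}\right]^{\frac{2k+1}{4k}}.$$
Plugging this together with $\|P_{2k}\|=1$ into \eqref{eq:lowe_bound} yields exactly \eqref{ali:even}.

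Every step is a routine substitution once the right test polynomial has been chosen; the only genuine observation is the multiplicativity $\|P_2^k\|=\|P_2\|^k$, which is immediate because we are on the same domain $B_{\ell_\infty^2}$. The conceptual content of the corollary is not in the calculation but in the strategy: the extremal information for degree $2$ packaged by the Krein--Milman argument of \thmref{the:L_R,2} propagates, by simply raising to the $k$-th power, to nontrivial lower bounds for $L_{\mathbb{R},2k}$ in all even degrees. Optimality of the resulting bound is not claimed, so no further analysis is needed.
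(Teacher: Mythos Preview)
Your proof is correct and follows essentially the same route as the paper: choose the test polynomial $P_{2k}=(t_0x^2-t_0y^2+2\sqrt{t_0(1-t_0)}\,xy)^k$, note it has norm $1$, compute its coefficients via Proposition~\ref{pro:A_j}, and plug into \eqref{ali:Phi} and \eqref{eq:lowe_bound}. The paper's proof is slightly terser but uses exactly these ingredients in the same order.
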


\begin{proof}
If $P_{2k}(x,y)=(ax^2+by^2+cxy)^k$, using \eqref{ali:Phi}, \eqref{eq:lowe_bound} and Proposition \ref{pro:A_j} we arrive at
    \begin{align*}
    L_{{\mathbb R},2k}\geq \frac{1}{\|P_{2k}\|}    \left[\sum_{j=0}^{2k}\binom{2k}{j}\left|\frac{A_j}{\binom
    {2k}{j}}\right|^{\frac{4k}{2k+1}}\right]^{\frac
    {2k+1}{4k}},
    \end{align*}
with $A_j$ as in \eqref{equ:Aj}. Then the corollary follows by considering the polynomial $$P_{2k}(x,y)=(t_0x^2-t_0y^2+ 2\sqrt{t_0(1-t_0)}xy)^{2k},$$ which has norm 1.

\end{proof}

Hence \eqref{ali:even} provides a systematic formula to obtain a lower bound for $L_{{\mathbb R},m}$ for even $m$'s.

Observe that for $k=2$ we have
    $$
    L_{{\mathbb R},4}\geq\left[2t_0^\frac{16}{5}+6\left(\frac{2t_0-3t_0^2}{3}\right)^\frac{8}{5}+8(t_0\sqrt{t_0(1-t_0)})^\frac{8}{5}\right]^\frac{5}{8}\approx 2.1595,
    $$
which is a slightly worse constant than the one obtained in Section \ref{sub:Casem4}. Actually, the estimates \eqref{ali:even} can be improved for multiples of 4. Indeed, we just need to consider the polynomials
    $$
    Q_{4k}(x,y)=(ax^4+by^4+cx^2y^2)^k,
    $$
with $k\in{\mathbb N}$. Using exactly the same procedure described in this section
    \begin{align*}
    L_{{\mathbb R},4k}\geq \frac{1}{\|Q_{4k}\|}    \left[\sum_{j=0}^{2k}\binom{4k}{2j}\left|\frac{A_j}{\binom
    {4k}{2j}}\right|^{\frac{8k}{4k+1}}\right]^{\frac
    {4k+1}{8k}},
    \end{align*}
where the $A_j$'s, with $j=1,\ldots,2k$ are the same as in \eqref{equ:Aj}. Now, putting $a=1$, $b=1$ and $c=-3$, i.e., considering powers of the extreme polynomial that appeared in Section \ref{sub:Casem4}, we would have that $\|Q_{4k}\|=1$ for all $k\in{\mathbb N}$,  which proves the following:

\begin{theorem}
If $k\in{\mathbb N}$ then
    \begin{align}\label{ali:L_R4k}
    L_{{\mathbb R},4k}\geq    \left[\sum_{j=0}^{2k}\binom{4k}{2j}\left|\frac{B_j}{\binom
    {4k}{2j}}\right|^{\frac{8k}{4k+1}}\right]^{\frac
    {4k+1}{8k}},
    \end{align}
where
    \begin{equation}\label{equ:B_j}
    B_j=\sum_{\ell=0}^{\left\lfloor \frac{j}{2}\right\rfloor }\frac
    {k!(-3)^{j-2\ell}}{\ell!(j-2\ell)!(k-j+\ell)!},
    \end{equation}
for $j=0,\ldots,2k$.
\end{theorem}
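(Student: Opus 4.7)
The plan is to specialize the general recipe of this section to $a=b=1$, $c=-3$, i.e., to the polynomials $Q_{4k}(x,y)=(x^4+y^4-3x^2y^2)^k$, and to feed the resulting data into \eqref{eq:lowe_bound}. Two ingredients are needed: an explicit expression for the coefficients of $Q_{4k}$ as a $4k$-homogeneous polynomial on $\mathbb{R}^2$, and the norm identity $\|Q_{4k}\|_{\ell_\infty^2}=1$.

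For the first ingredient I would use the substitution $X=x^2$, $Y=y^2$ to rewrite $Q_{4k}(x,y)=(X^2+Y^2-3XY)^k$ and then apply Proposition \ref{pro:A_j} in the variables $(X,Y)$ with $(a,b,c)=(1,1,-3)$. This gives $(X^2+Y^2-3XY)^k=\sum_{j=0}^{2k}B_j X^j Y^{2k-j}$ with $B_j$ exactly as in \eqref{equ:B_j}. Reverting the substitution produces
$$
Q_{4k}(x,y)=\sum_{j=0}^{2k}B_j\,x^{2j}y^{4k-2j},
$$
so the only nonzero multi-indices $\alpha$ appearing in the expansion are of the form $(2j,4k-2j)$, for which $\binom{4k}{\alpha}=\binom{4k}{2j}$. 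Substituting into \eqref{ali:Phi} identifies $\Phi_{4k,2}(\mathbf{a})$ with the right-hand side of \eqref{ali:L_R4k}.

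For the norm identity, note that $(x,y)\mapsto(x^2,y^2)$ sends $[-1,1]^2$ onto $\Box=[0,1]^2$, and the correspondence $ax^4+by^4+cx^2y^2\leftrightarrow aX^2+bY^2+cXY$ is a linear isometry between the subspace $E$ of Section \ref{sub:Casem4} and $\mathcal{P}(^2\Box)$. Under this identification $x^4+y^4-3x^2y^2$ corresponds to $X^2+Y^2-3XY$, which by Theorem \ref{the:square} is an extreme point of $\mathsf{B}_{\mathcal{P}(^2\Box)}$; in particular $\|x^4+y^4-3x^2y^2\|_{\ell_\infty^2}=1$. Since $\|P^k\|=\|P\|^k$ for any continuous $P$ on a compact set, we obtain $\|Q_{4k}\|=1$, and combining with \eqref{eq:lowe_bound} yields \eqref{ali:L_R4k}. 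The only delicate point is this norm computation; once the isometric identification with $\mathcal{P}(^2\Box)$ is noticed and Theorem \ref{the:square} is invoked, the rest is a matter of bookkeeping driven by Proposition \ref{pro:A_j}, so I expect no serious technical obstacle.
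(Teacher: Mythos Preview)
Your proposal is correct and follows essentially the same route as the paper: specialize to $Q_{4k}(x,y)=(x^4+y^4-3x^2y^2)^k$, extract the coefficients via the substitution $X=x^2$, $Y=y^2$ together with Proposition~\ref{pro:A_j}, and use that the base polynomial has norm $1$ (as the extreme polynomial identified in Section~\ref{sub:Casem4} via Theorem~\ref{the:square}) so that $\|Q_{4k}\|=1$ and \eqref{eq:lowe_bound} yields \eqref{ali:L_R4k}. Your write-up simply makes explicit the substitution and the norm argument that the paper leaves implicit.
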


As an example, let us apply \eqref{ali:L_R4k} and \eqref{equ:B_j} to obtain estimates for $L_{{\mathbb R},8}$ and $L_{{\mathbb R},12}$. The polynomials
are
\begin{align*}
Q_{8}(x,y)  &  =x^{8}-6x^{6}y^{2}+11x^{4}y^{4}-6x^{2}y^{6}+y^{8},\\
Q_{12}(x,y)  &  =x^{12}-9x^{10}y^{2}+30x^{8}y^{4}-45x^{6}y^{6}+30x^{4}%
y^{8}-9x^{2}y^{10}+y^{12}.
\end{align*}
Then
    \begin{align*}
    L_{{\mathbb{R}},8}&\geq\left[  2+2 \binom{8}{2}\left(  \frac{6}{\binom{8}{2}
    }\right)  ^{\frac{16}{9}}+\binom{8}{4}\left(  \frac{11}{\binom{8}{4}}\right)
    ^{\frac{16}{9}}\right]  ^{\frac{9}{16}}\approx3.2725.\\
    L_{{\mathbb{R}},12}  &  \geq\left[  2+2\binom{12}{2}\left(  \frac{9}%
    {\binom{12}{2}}\right)  ^{\frac{24}{13}}+2 \binom{12}{4}\left(  \frac
    {30}{\binom{12}{4}}\right)  ^{\frac{24}{13}} +\binom{12}{6}\left(  \frac
    {45}{\binom{12}{6}}\right)  ^{\frac{24}{13}}\right]  ^{\frac{13}{24}}
    \approx4.2441.
    \end{align*}
For higher degrees see Table \ref{tab:L_R,4k}.

\begin{table}[htb]
  \centering
\begin{tabular}
[c]{|l|l||l|l|}\hline
%$k=2$ & $L_{\mathbb{R},8} \geq3.272523825$ & $k=20$ & $L_{\mathbb{R},80}
%\geq193.6544411$\\\hline
%$k=3$ & $L_{\mathbb{R},12} \geq4.244128529$ & $k=30$ & $L_{\mathbb{R},120}
%\geq1804.037763$\\\hline
$k=4$ & $L_{\mathbb{R},16} \geq5.390975019$ & $k=40$ & $L_{\mathbb{R},160}
\geq16805.46318$\\\hline
$k=5$ & $L_{\mathbb{R},20} \geq6.787708182$ & $k=50$ & $L_{\mathbb{R},200}
\geq1.5654\times10^{5}$\\\hline
$k=6$ & $L_{\mathbb{R},24} \geq8.511696468$ & $k=60$ & $L_{\mathbb{R},240}
\geq1.4581\times10^{6}$\\\hline
%$k=7$ & $L_{\mathbb{R},28} \geq10.65308824$ & $k=70$ & $L_{\mathbb{R},280}
%\geq1.3581\times10^{7}$\\\hline
%$k=8$ & $L_{\mathbb{R},32} \geq13.32129548$ & $k=80$ & $L_{\mathbb{R},320}
%\geq1.2649\times10^{8}$\\\hline
$k=9$ & $L_{\mathbb{R},36} \geq16.65124974$ & $k=90$ & $L_{\mathbb{R},360}
\geq1.1781\times10^{9}$\\\hline
$k=10$ & $L_{\mathbb{R},40} \geq20.81051033$ & $k=100$ & $L_{\mathbb{R},400}
\geq1.0972\times10^{10}$\\\hline
\end{tabular}
\vspace{0.2cm}
\caption{$L_{\mathbb{R},4k}$ for some values of k}\label{tab:L_R,4k}
\end{table}

In order to clarify what the asymptotic growth of the sequence $\left(L_{\mathbb{R},4k}\right)_{k\in{\mathbb N}}$ is, a simple calculation of the quotients of the estimates obtained in Table \ref{tab:L_R,4k} for higher values of $k$ indicates that the ratio of the estimates on  $L_{\mathbb{R},4(k+1)}$ and $L_{\mathbb{R},4k}$  seem to tend to $\frac{5}{4}$.

%, it would be helpful to know the limit of
%the sequence $\left(\frac{L_{\mathbb{R},4(k+1)}%
%}{L_{\mathbb{R},4k}}\right)$
%\begin{table}[b]
%  \centering
%\begin{tabular}
%[c]{|l|l||l|l|}\hline
%$k=2$ & $1.296897671$ & $k=30$ & $1.250034765$\\\hline
%$k=3$ & $1.270219547$ & $k=40$ & $1.250024741$\\\hline
%$k=4$ & $1.259087292$ & $k=50$ & $1.250017011$\\\hline
%$k=5$ & $1.253986801$ & $k=60$ & $1.250012951$\\\hline
%$k=6$ & $1.251582253$ & $k=70$ & $1.250009893$\\\hline
%$k=7$ & $1.250463263$ & $k=80$ & $1.250007786$\\\hline
%$k=8$ & $1.249972254$ & $k=90$ & $1.250006279$\\\hline
%$k=9$ & $1.249786692$ & $k=100$ & $1.250005167$\\\hline
%$k=10$ & $1.249744891$ & $k=110$ & $1.250004325$\\\hline
%$k=11$ & $1.249765711$ & $k=120$ & $1.250003672$\\\hline
%$k=12$ & $1.249809180$ & $k=130$ & $1.250003156$\\\hline
%$k=13$ & $1.249856461$ & $k=140$ & $1.250002741$\\\hline
%$k=14$ & $1.249899451$ & $k=150$ & $1.2500024035$\\\hline
%$k=15$ & $1.249935361$ & $k=160$ & $1.250002123$\\\hline
%$k=16$ & $1.249963897$ & $k=170$ & $1.250001889$\\\hline
%$k=17$ & $1.249985810$ & $k=180$ & $1.250001693$\\\hline
%$k=18$ & $1.250002205$ & $k=190$ & $1.250001525$\\\hline
%$k=19$ & $1.250014189$ & $k=200$ & $1.250001381$\\\hline
%$k=20$ & $1.250022745$ & $k=299$ & $1.250000630$\\\hline
%\end{tabular}
%\vspace{0.2cm}
%\caption{Quotient of the estimates on $L_{\mathbb{R},4(k+1)}$ and $L_{\mathbb{R},4k}$ for some values of k}\label{tab:quotientL_R4k}
%\end{table}

% Observe that the ratio of the estimates on  $L_{\mathbb{R},4(k+1)}$ and $L_{\mathbb{R},4k}$  shown in Table \ref{tab:quotientL_R4k} seem to tend to $\frac{5}{4}$.

\section{Estimates for $D_{\mathbb{R},m}$}

First observe that if ${\mathcal P}(^m\ell_\infty^n)$ has dimension $d$, then $D_{{\mathbb R},m}(\ell_\infty^n)$ is nothing but the optimal (smallest) equivalence constant between the spaces $\ell_\frac{2m}{m+1}^d$ and ${\mathcal P}(^m\ell_\infty^n)$. In other words, if we identify the polynomial $P_{\bf a}({\bf x})=\sum_{|\alpha|=m}a_{\alpha}\mathbf{x}^{\alpha}\in{\mathcal P}(^m\ell_\infty^n)$ with the vector ${\bf a}$ in ${\mathbb R}^d$ of all its coefficients, then
    \begin{equation}\label{equ:estimate_D_R,m}
    D_{{\mathbb R},m}(\ell_\infty^n)=\sup\left\{\frac{\|{\bf a}\|_\frac{2m}{m+1}}{\|P_{\bf a}\|}:P_{\bf a}\in {\mathcal P}(^m\ell_\infty^n)\right\}=
    \sup\left\{\|{\bf a}\|_\frac{2m}{m+1}:P_{\bf a}\in {\mathsf B}_{{\mathcal P}(^m\ell_\infty^n)}\right\},
    \end{equation}
where $\|\cdot\|_p$ denotes the $\ell_p$ norm. By convexity of $\|\cdot\|_p$ we also have
    \begin{equation}\label{equ:D_R,m}
    D_{{\mathbb R},m}(\ell_\infty^n)=
    \sup\left\{\|{\bf a}\|_\frac{2m}{m+1}:P_{\bf a}\in \ext({\mathsf B}_{{\mathcal P}(^m\ell_\infty^n)})\right\}.
    \end{equation}
As an easy consequence of Theorem \ref{the:ExtPoints} and \eqref{equ:D_R,m} we have:

\begin{theorem}
    $$
    D_{{\mathbb R},2}\geq D_{{\mathbb R},2}(\ell_\infty^2)=\sup\left\{\left[2t^\frac{4}{3}+\left(2\sqrt{t(1-t)}\right)^\frac{4}{3}\right]^\frac{3}{4}:t\in[1/2,1]\right\}\approx 1.8374.
    $$

\end{theorem}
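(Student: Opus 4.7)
The plan is to apply the extremal characterization (\ref{equ:D_R,m}) directly, using the Choi--Kim description of $\ext({\mathsf B}_{{\mathcal P}(^2\ell_\infty^2)})$ provided by Theorem \ref{the:ExtPoints}. That theorem lists two families of extreme polynomials: the ``trivial'' ones $\pm x^2,\pm y^2$, whose coefficient vectors $\pm(1,0,0),\pm(0,1,0)\in\mathbb{R}^3$ have $\ell_{4/3}$-norm equal to $1$; and the one-parameter family $\pm(tx^2-ty^2\pm 2\sqrt{t(1-t)}\,xy)$ with $t\in[1/2,1]$, whose coefficient vectors are $\pm(t,-t,\pm 2\sqrt{t(1-t)})$.

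For the second family, a direct computation gives
$$\|(t,-t,\pm 2\sqrt{t(1-t)})\|_{4/3}=\left[2t^{4/3}+(2\sqrt{t(1-t)})^{4/3}\right]^{3/4}.$$
Evaluating at $t=1/2$ already yields $\bigl[2(1/2)^{4/3}+1\bigr]^{3/4}>1$, so the trivial extreme points contribute nothing to the supremum and can be discarded. Consequently, from (\ref{equ:D_R,m}),
$$D_{\mathbb R,2}(\ell_\infty^2)=\sup_{t\in[1/2,1]}\left[2t^{4/3}+(2\sqrt{t(1-t)})^{4/3}\right]^{3/4},$$
which is the claimed identity. The bound $D_{{\mathbb R},2}\ge D_{\mathbb R,2}(\ell_\infty^2)$ is automatic from the definition of $D_{\mathbb R,2}$ as an infimum of constants valid in all dimensions.

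It remains to verify that this supremum is $\approx 1.8374$. Setting $g(t)=2t^{4/3}+2^{4/3}[t(1-t)]^{2/3}$, one differentiates, obtains an algebraic equation in $t$, and solves for the interior critical point. The monotonicity of $x\mapsto x^{3/4}$ means one can work with $g$ rather than $g^{3/4}$. As in the analogous computation in Theorem \ref{the:L_R,2}, the exact maximizer can be expressed in radicals but the expression is unwieldy; a four-digit numerical evaluation yields the stated value $1.8374$.

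The main (and essentially only) obstacle is the one-variable optimization at the end, which is purely computational and entirely parallel to the calculation already carried out in Section \ref{sub:Casem2} for $L_{\mathbb R,2}(\ell_\infty^2)$. The structural step, i.e.\ reducing the problem to a supremum over a one-parameter family of extreme polynomials via Krein--Milman, is immediate once Theorem \ref{the:ExtPoints} is in hand.
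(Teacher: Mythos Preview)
Your proposal is correct and follows exactly the approach the paper indicates: the paper states the theorem as ``an easy consequence of Theorem \ref{the:ExtPoints} and \eqref{equ:D_R,m}'' without further elaboration, and you have simply filled in those details---applying the Krein--Milman reduction \eqref{equ:D_R,m}, inserting the Choi--Kim extreme points, discarding the trivial ones, and noting that the remaining one-variable optimization is handled numerically just as in Theorem \ref{the:L_R,2}.
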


The above supremum can be given explicitly in radical form using a symbolic calculus package, however the result is too lengthy to be shown. An excellent approximation can be obtain though in very simple terms considering the polynomial $P\in{\mathcal P}(^2\ell_\infty^2)$ defined by
    $$
    P(x,y)=x^2-y^2+xy.
    $$
Since $\|P\|=5/4$, from \eqref{equ:estimate_D_R,m} it follows that
    $$
    D_{\mathbb{R},2}\geq D_{\mathbb{R},2}(\ell_\infty^2)\geq \frac{\left(  3\right)  ^{3/4}}{5/4}\approx1.823.
    $$

\subsection{The case $m=3$}
Let us define $P_{3}:\ell_{\infty}^{6}\rightarrow\mathbb{R}$
by
    \[
    P_{3}(x)=(x_{1}+x_{2})\left(  x_{3}^{2}+x_{3}x_{4}-x_{4}^{2}\right)
    +(x_{1}-x_{2})\left(  x_{5}^{2}+x_{5}x_{6}-x_{6}^{2}\right)  .
    \]
We have $\Vert P_{3}\Vert=2\times\frac{5}{4}$. Also
    \[
    \left(
    {\textstyle\sum\limits_{\left\vert \alpha\right\vert =3}}
    \left\vert a_{\alpha}\right\vert ^{\frac{6}{4}}\right)  ^{\frac{4}{6}}\leq
    D_{\mathbb{R},3}\left\Vert P_{3}\right\Vert.
    \]
Therefore

\begin{proposition}
    \[
    D_{\mathbb{R},3}\geq\frac{\left(  4\times3\right)  ^{4/6}}{2\times\frac{5}{4}%
    }\approx2.096.
    \]
\end{proposition}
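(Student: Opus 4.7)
The plan is to exhibit that the polynomial $P_3$ has the claimed norm $\|P_3\|=5/2$ and that its coefficient vector has large $\ell_{6/4}$-norm, then simply plug into the Bohnenblust--Hille inequality to solve for $D_{\mathbb{R},3}$. The design of $P_3$ is a deliberate ``doubling'' that uses two disjoint copies of the extremal $2$-homogeneous polynomial $u^2+uv-v^2$ (which has norm $5/4$ on $\ell_\infty^2$, as recorded in the Remark after Theorem~\ref{the:L_R,2}) glued together through the linear forms $x_1\pm x_2$ in order to multiply the number of nonzero monomials while controlling the supremum norm.

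For the norm computation, I would first establish the upper bound $\|P_3\|\le 5/2$ by a triangle-inequality estimate. For $\|x\|_\infty\le 1$,
\begin{align*}
|P_3(x)|
&\le |x_1+x_2|\cdot |x_3^2+x_3x_4-x_4^2|+|x_1-x_2|\cdot|x_5^2+x_5x_6-x_6^2|\\
&\le \tfrac{5}{4}\bigl(|x_1+x_2|+|x_1-x_2|\bigr)
= \tfrac{5}{4}\cdot 2\max(|x_1|,|x_2|)\le \tfrac{5}{2},
\end{align*}
using that $|u^2+uv-v^2|\le 5/4$ on $[-1,1]^2$ (this is exactly the norm computation of the polynomial appearing in the Remark after Theorem~\ref{the:L_R,2}, maximized at $(1,1/2)$). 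Then I would match the bound by evaluating at the explicit point $x_1=1$, $x_2=0$, $(x_3,x_4)=(x_5,x_6)=(1,1/2)$, which gives $P_3(x)=\tfrac54+\tfrac54=\tfrac52$.

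Next, I would expand $P_3$ as a sum of $3$-homogeneous monomials. Because the index sets $\{3,4\}$ and $\{5,6\}$ are disjoint, the six monomials arising from $(x_1+x_2)(x_3^2+x_3x_4-x_4^2)$ are disjoint from the six coming from $(x_1-x_2)(x_5^2+x_5x_6-x_6^2)$, and no cancellation occurs. One therefore obtains exactly $12$ monomials, each with coefficient $\pm 1$, so that
\[
\sum_{|\alpha|=3}|a_\alpha|^{6/4}=12,\qquad
\Bigl(\sum_{|\alpha|=3}|a_\alpha|^{6/4}\Bigr)^{4/6}=12^{2/3}=(4\cdot 3)^{4/6}.
\]

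Finally, applying the Bohnenblust--Hille inequality to $P_3$ yields
\[
(4\cdot 3)^{4/6}\le D_{\mathbb{R},3}\,\|P_3\|=D_{\mathbb{R},3}\cdot\tfrac{5}{2},
\]
whence $D_{\mathbb{R},3}\ge \tfrac{(4\cdot 3)^{4/6}}{5/2}\approx 2.096$. No genuine obstacle arises in this argument; the only place that needs care is confirming that the supremum in the norm estimate is attained simultaneously by the two quadratic factors (which it is, since they involve disjoint coordinates and the linear combination $|x_1+x_2|+|x_1-x_2|$ can itself be made equal to $2$ with both $x_1\pm x_2$ of the correct sign).
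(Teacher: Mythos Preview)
Your argument is correct and follows exactly the approach of the paper: exhibit the polynomial $P_3$, compute its norm $\|P_3\|=5/2$, count its $12$ unit coefficients, and plug into the Bohnenblust--Hille inequality. You have simply supplied the details the paper leaves implicit (the triangle-inequality upper bound, the explicit point where the norm is attained, and the monomial expansion), all of which are correct.
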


\subsection{The case $m=4$}

Acting as in Section \ref{sub:Casem4}, we can prove that the maximum value of $\frac{\|{\bf a}\|_\frac{8}{5}}{\|P_{\bf a}\|}$ where $P_{\bf a}$ ranges over the subspace of ${\mathcal P}(^4\ell_\infty^2)$ given by
    $$
    \{ax^4+by^4+cx^2y^2:a,b,c\in{\mathbb R}\},
    $$
is attained for the polynomial $Q_4(x,y)=x^4+y^4-3x^2y^2$. Hence, by \eqref{equ:estimate_D_R,m}, we have:

\begin{theorem}
If $E=\{ax^{4}+by^{4}+cx^{2}y^{2}:a,b,c\in{\mathbb{R}}\}$  is endowed with the sup norm over the unit ball of $\ell_\infty^2$, then
    $$
    D_{{\mathbb R},4}(E)=\|(1,1,-3)\|_\frac{8}{5}=\left(  2+\left(  3\right)  ^{8/5}\right)  ^{5/8}%
    \approx3.\,610.
    $$
In particular $$D_{{\mathbb R},4}\geq D_{{\mathbb R},4}(\ell_\infty^2)\geq D_{{\mathbb R},4}(E)\approx3.\,610.$$ Moreover, equality is attained in the polynomial Bohnenblust-Hille inequality in $E$
for the polynomials $P(x,y)=\pm (x^{4}-y^{4}+3xy)$.
\end{theorem}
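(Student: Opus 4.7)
The plan is to follow the same strategy used in Section \ref{sub:Casem4}. The first step is to observe that the map $ax^4 + by^4 + cx^2y^2 \mapsto aX^2 + bY^2 + cXY$, induced by the substitution $X = x^2$, $Y = y^2$, is an isometric isomorphism from $E$ onto $\mathcal{P}(^2\Box)$, since $\{(x^2, y^2) : (x,y) \in [-1,1]^2\} = [0,1]^2 = \Box$. This identification preserves the coefficient vector $(a,b,c)$, so by \eqref{equ:estimate_D_R,m},
$$D_{\mathbb{R},4}(E) = \sup\{\|(a,b,c)\|_{8/5} : aX^2 + bY^2 + cXY \in \mathsf{B}_{\mathcal{P}(^2\Box)}\}.$$
Because $\|\cdot\|_{8/5}$ is a convex function, Theorem \ref{remark} reduces this to the supremum over $\ext(\mathsf{B}_{\mathcal{P}(^2\Box)})$.

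The second step is to invoke Theorem \ref{the:square} to list the extreme points explicitly and compute $\|(a,b,c)\|_{8/5}$ on each. The one-parameter families $\pm(t,-1,2\sqrt{1-t})$ and $\pm(-1,t,2\sqrt{1-t})$ for $t\in[0,1]$ both give the common value
$$f(t) := \left(1 + t^{8/5} + 2^{8/5}(1-t)^{4/5}\right)^{5/8},$$
while the discrete extreme points contribute $\|(1,0,0)\|_{8/5} = \|(0,1,0)\|_{8/5} = 1$, $\|(1,1,-1)\|_{8/5} = 3^{5/8}$, and $\|(1,1,-3)\|_{8/5} = (2 + 3^{8/5})^{5/8}$.

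The third step is to check that the last of these is the maximum. The bound
$$f(t)^{8/5} = 1 + t^{8/5} + 2^{8/5}(1-t)^{4/5} \leq 2 + 2^{8/5} < 2 + 3^{8/5}$$
holds throughout $[0,1]$ (using $t^{8/5} \leq 1$ and $(1-t)^{4/5} \leq 1$), so the continuous families cannot compete with $\pm(1,1,-3)$, and the comparison with the remaining discrete points is immediate. Pulling this extreme point back through the isometry yields the extremal polynomial $P(x,y) = \pm(x^4 + y^4 - 3x^2y^2)$, whose sup norm over the $\ell_\infty^2$-ball is $1$ (achieved at the vertices $(\pm 1,\pm 1)$) and whose coefficient $\ell_{8/5}$-norm is exactly $(2+3^{8/5})^{5/8}$. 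No substantial obstacle arises: once the isometry with $\mathcal{P}(^2\Box)$ is recognized, everything else is a straightforward enumeration plus a trivial coordinatewise bound.
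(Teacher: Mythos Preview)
Your proof is correct and follows essentially the same route as the paper: identify $E$ isometrically with $\mathcal{P}(^2\Box)$ via the substitution $(X,Y)=(x^2,y^2)$, reduce to extreme points by Krein--Milman (Theorem~\ref{remark}), enumerate them from Theorem~\ref{the:square}, and compare the $\ell_{8/5}$-norms. You also correctly identify the extremal polynomial as $\pm(x^4+y^4-3x^2y^2)$, in agreement with the paper's $Q_4$ in the paragraph preceding the theorem (the expression $\pm(x^4-y^4+3xy)$ in the theorem statement is a typo).
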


\subsection{Higher values of $m$}

We consider again  the polynomials
    $$
    Q_{4k}(x,y)=\left(x^4+y^4 -3x^2y^2\right)^k,
    $$
for all $k\in{\mathbb N}$. Notice that $\| Q_{4k}\|=1$ for all $k\in{\mathbb N}$. Therefore, using \eqref{equ:estimate_D_R,m} together with the formula for the coefficients of the $Q_{4k}$ given by \eqref{equ:B_j}, we can obtain estimates for $D_{{\mathbb R},4k}$ with $k$ arbitrary (see Table \ref{tab3}). In fact we have:

\begin{theorem}
If $k\in{\mathbb N}$ then
    $$
    D_{{\mathbb R},4k}\geq    \left[\sum_{j=0}^{2k}\left|B_j\right|^{\frac{8k}{4k+1}}\right]^{\frac
    {4k+1}{8k}},
    $$
where
    $$
    B_j=\sum_{\ell=0}^{\left\lfloor \frac{j}{2}\right\rfloor }\frac
    {k!(-3)^{j-2\ell}}{\ell!(j-2\ell)!(k-j+\ell)!},
    $$
for $j=0,\ldots,2k$.
\end{theorem}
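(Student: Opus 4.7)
The plan is to apply the lower-bound formula \eqref{equ:estimate_D_R,m} with the test polynomial $Q_{4k}(x,y) = (x^4+y^4-3x^2y^2)^k$, exactly in the spirit of the argument given in the preceding subsection for $L_{\mathbb{R},4k}$. The argument splits into a norm computation, a coefficient computation, and the plug-in.

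First, I would verify that $\|Q_{4k}\|_{\ell_\infty^2} = 1$. Since $Q_{4k}$ is a pointwise $k$-th power of $p(x,y) := x^4+y^4-3x^2y^2$, we have $\|Q_{4k}\| = (\|p\|_{\ell_\infty^2})^k$, so it suffices to show $\|p\|_{\ell_\infty^2} = 1$. The substitution $(u,v) = (x^2,y^2)$ maps $[-1,1]^2$ onto $\Box = [0,1]^2$, so $\|p\|_{\ell_\infty^2}$ equals the sup norm over $\Box$ of the quadratic polynomial $u^2+v^2-3uv$. By Theorem~\ref{the:square}, this polynomial is an extreme point of $\mathsf{B}_{\mathcal{P}(^2\Box)}$, hence has norm $1$. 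Therefore $\|Q_{4k}\|=1$.

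Second, I would compute the coefficients of $Q_{4k}$ by a multinomial expansion identical in form to Proposition~\ref{pro:A_j}, but with $(x^4, y^4, x^2y^2)$ in place of $(x^2, y^2, xy)$ and $(a,b,c)=(1,1,-3)$. Since every factor contributes even powers of both variables, $Q_{4k}$ has only monomials of the form $x^{2j}y^{4k-2j}$ with $j\in\{0,\dots,2k\}$. Solving $4\alpha_1+2\alpha_3 = 2j$ and $\alpha_1+\alpha_2+\alpha_3=k$ gives, exactly as in Proposition~\ref{pro:A_j}, $\alpha_3 = j-2\ell$ and $\alpha_2 = k-j+\ell$ with $\ell = \alpha_1$ ranging over $0,\dots,\lfloor j/2\rfloor$ (terms with $k-j+\ell<0$ are understood as zero). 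This yields
\[
B_j \;=\; \sum_{\ell=0}^{\lfloor j/2\rfloor} \frac{k!\,(-3)^{j-2\ell}}{\ell!\,(j-2\ell)!\,(k-j+\ell)!}
\]
as the coefficient of $x^{2j}y^{4k-2j}$ in $Q_{4k}$.

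Finally, \eqref{equ:estimate_D_R,m} with $m=4k$ (so $\tfrac{2m}{m+1} = \tfrac{8k}{4k+1}$) gives
\[
D_{\mathbb{R},4k} \;\geq\; D_{\mathbb{R},4k}(\ell_\infty^2) \;\geq\; \frac{\|\mathbf{a}\|_{\frac{8k}{4k+1}}}{\|Q_{4k}\|} \;=\; \left[\sum_{j=0}^{2k}|B_j|^{\frac{8k}{4k+1}}\right]^{\frac{4k+1}{8k}},
\]
which is the claimed estimate.

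There is no serious obstacle: the result is essentially a repackaging of the machinery already developed for $L_{\mathbb{R},4k}$ via \eqref{ali:L_R4k}, with the polynomial norm replaced by the $\ell_{8k/(4k+1)}$-norm of the coefficient vector (no multinomial factors appear because we are bounding $D_{\mathbb{R},4k}$ rather than $L_{\mathbb{R},4k}$). The only genuinely non-cosmetic point is the norm computation for $Q_{4k}$, and for that the crucial observations are that the sup of $|f|^k$ equals $(\sup|f|)^k$ and that the reduction $(u,v)=(x^2,y^2)$ puts the problem exactly in the setting of Theorem~\ref{the:square}.
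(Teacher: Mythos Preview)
Your proposal is correct and follows essentially the same approach as the paper: apply \eqref{equ:estimate_D_R,m} to the test polynomial $Q_{4k}(x,y)=(x^4+y^4-3x^2y^2)^k$, use that $\|Q_{4k}\|=1$ (via the identification with the extreme polynomial $u^2+v^2-3uv$ of $\mathsf{B}_{\mathcal{P}(^2\Box)}$), and read off the coefficients $B_j$ from the multinomial expansion already recorded in \eqref{equ:B_j}. You have in fact spelled out more detail than the paper, which simply cites \eqref{equ:estimate_D_R,m} and \eqref{equ:B_j} after noting $\|Q_{4k}\|=1$.
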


\vspace{0.2cm}
\begin{table}[!htb]
  \centering
\begin{tabular}{|l|l||l|l|}
  \hline
  $m=8$ & $D_{\mathbb{R},8} \geq 14.86998167$ & $m=80$ & $D_{\mathbb{R},80} \geq 3.0496\times10^{13}$ \\ \hline
  $m=12$ & $D_{\mathbb{R},12} \geq 66.39260961$ & $m=120$ & $D_{\mathbb{R},120} \geq 2.6821\times10^{20}$ \\ \hline
  $m=16$ & $D_{\mathbb{R},16} \geq 306.6665737$ & $m=160$ & $D_{\mathbb{R},160} \geq 2.4320\times10^{27}$ \\ \hline
  $m=20$ & $D_{\mathbb{R},20} \geq 1442.799763$ & $m=200$ & $D_{\mathbb{R},200} \geq 2.2443\times10^{34}$ \\ \hline
  $m=24$ & $D_{\mathbb{R},24} \geq 6866.770014$ & $m=240$ & $D_{\mathbb{R},240} \geq 2.0924\times10^{41}$ \\ \hline
  $m=28$ & $D_{\mathbb{R},28} \geq 32940.16505$ & $m=280$ & $D_{\mathbb{R},280} \geq 1.9649\times10^{48}$ \\ \hline
  $m=32$ & $D_{\mathbb{R},32} \geq 1.5892\times10^5$ & $m=320$ & $D_{\mathbb{R},320} \geq 1.8549\times10^{55}$ \\ \hline
  $m=36$ & $D_{\mathbb{R},36} \geq 7.7009\times10^5$ & $m=360$ & $D_{\mathbb{R},360} \geq 1.7582\times10^{62}$ \\ \hline
  $m=40$ & $D_{\mathbb{R},40} \geq 3.7444\times10^6$ & $m=400$ & $D_{\mathbb{R},400} \geq 1.6718\times10^{69}$ \\
  \hline
\end{tabular}
\vspace{0.2cm}
\caption{Estimates for $D_{\mathbb{R},m}$ for some values of $m$.}\label{tab3}
\end{table}

\vspace{0.2cm}
Obtaining more constants, we also get the following representation on the form $C^m$ of these lower bounds:

\vspace{0.2cm}
\begin{table}[!htb]
  \centering
\begin{tabular}[c]{|l|l||l|l|}
\hline
$m=8$ & $D_{\mathbb{R},8}\geq\left(  1.40132479\right)  ^{8}$ &  $m=5600$ & $D_{\mathbb{R},5600}\geq\left(  1.49475760 \right)  ^{5600}$ \\\hline

$m=200$ & $D_{\mathbb{R},200}\geq\left(  1.48509930\right)  ^{200}$ & $m=6400$ & $D_{\mathbb{R},6400}\geq\left(  1.49482368 \right)  ^{6400}$\\\hline

$m=800$ & $D_{\mathbb{R},800}\geq\left(  1,49212548\right)  ^{800}$ & $m=7200$ & $D_{\mathbb{R},7200}\geq\left(  1.49487590\right)  ^{7200}$\\\hline

$m=1600$ & $D_{\mathbb{R},1600}\geq\left(  1,49357368\right)  ^{1600}$ & $m=8000$ & $D_{\mathbb{R},8000}\geq\left(  1.49491825\right)  ^{8000}$ \\\hline

$m=3200$ & $D_{\mathbb{R},3200}\geq\left(  1.49437981 \right)  ^{3200}$ & $m=8800$ & $D_{\mathbb{R},8800}\geq\left(  1.49495333\right)  ^{8800}$ \\\hline

$m=4000$ & $D_{\mathbb{R},4000}\geq\left(  1.49455267 \right)  ^{4000}$ & $m=9600$ & $D_{\mathbb{R},9600}\geq\left(  1.49498289\right)  ^{9600}$\\\hline

$m=4800$ & $D_{\mathbb{R},4800}\geq\left(  1.49467111 \right)  ^{4800}$ & $m=12000$ & $D_{\mathbb{R},12000}\geq\left(  1.49504910 \right)  ^{12000}$ \\\hline
\end{tabular}
\vspace{0.2cm}
\caption{Estimates for $D_{\mathbb{R},m}$ in the form $D_{\mathbb{R},m} \geq C^m$.}\label{tab4}
\end{table}

\section{A lower estimate for $D_{\mathbb{C},2}$}

Let $P_{2}:\ell_{\infty}^{2}\left(  \mathbb{C}\right)  \rightarrow\mathbb{C}$
be a $2$-homogeneous polynomial given by
\[
P_{2}(z_{1},z_{2})=az_{1}^{2}+bz_{2}^{2}+cz_{1}z_{2}.
\]
with $a,b,c\in\mathbb{R}$. The following result can be obtained from a standard application of the
Maximum Modulus Principle together with \cite[eq. (3.1)]{AronKlimek}.

\begin{proposition}
If $P_{2}:\ell_{\infty}^{2}\left(  \mathbb{C}\right)  \rightarrow\mathbb{C}$
is defined by $P_{2}(z_{1},z_{2})=az_{1}^{2}+bz_{2}^{2}+cz_{1}z_{2}$ with
$a,b,c\in\mathbb{R}$, then
\[
\Vert P_{2}\Vert=
\begin{cases}
|a+b|+|c| & \text{if $ab\geq0$ or $|c(a+b)|>4|ab|$,}\\
\left(  |a|+|b|\right)  \sqrt{1+\frac{c^{2}}{4|ab|}} & \text{otherwise.}%
\end{cases}
\]
\end{proposition}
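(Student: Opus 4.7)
The first step is to pass from the whole polydisc to the distinguished boundary. Since $P_2$ is holomorphic in each variable separately, the Maximum Modulus Principle (applied iteratively) gives
\[
\|P_2\|=\sup_{|z_1|=|z_2|=1}\bigl|P_2(z_1,z_2)\bigr|.
\]
Parametrizing $z_j=e^{i\theta_j}$ and introducing the sum/difference variables $\psi=\theta_1+\theta_2$, $\phi=\theta_1-\theta_2$, the factor $e^{i\psi}$ can be pulled out, and since $a,b,c\in\mathbb{R}$ one is reduced to computing the maximum over $\phi\in[-\pi,\pi]$ of $\bigl|ae^{i\phi}+be^{-i\phi}+c\bigr|$. (This is essentially the content of equation (3.1) in \cite{AronKlimek}.)

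Next I would expand this modulus squared, splitting into real and imaginary parts:
\[
\bigl|ae^{i\phi}+be^{-i\phi}+c\bigr|^{2}=\bigl[(a+b)\cos\phi+c\bigr]^{2}+(a-b)^{2}\sin^{2}\phi .
\]
Using $\sin^{2}\phi=1-\cos^{2}\phi$ and the identity $(a+b)^{2}-(a-b)^{2}=4ab$, and setting $t=\cos\phi\in[-1,1]$, this collapses to the quadratic
\[
g(t)=4ab\,t^{2}+2c(a+b)\,t+c^{2}+(a-b)^{2}.
\]
Thus the problem reduces to maximizing $g$ over $[-1,1]$, and $\|P_2\|=\sqrt{\max_{t\in[-1,1]}g(t)}$.

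The final step is a straightforward case analysis on the sign of the leading coefficient $4ab$ and on the location of the vertex $t^{*}=-c(a+b)/(4ab)$ of $g$. If $ab\geq 0$, $g$ is convex, so the maximum is at an endpoint: $g(1)=(a+b+c)^{2}$ and $g(-1)=(a+b-c)^{2}$, whence $\|P_2\|=|a+b|+|c|$. If $ab<0$, $g$ is concave; when $|t^{*}|>1$, i.e.\ $|c(a+b)|>4|ab|$, the maximum is again at the appropriate endpoint and one checks that $a+b$ and $\pm c$ have the same sign, giving again $\|P_2\|=|a+b|+|c|$. In the remaining sub-case $|c(a+b)|\leq 4|ab|$, the maximum is attained at the interior critical point $t^{*}$, and the standard formula $C-B^{2}/(4A)$ for the vertex of $At^{2}+Bt+C$ yields
\[
g(t^{*})=c^{2}+(a-b)^{2}+\frac{c^{2}(a+b)^{2}}{4|ab|}.
\]
To match the stated expression I would use that for $ab<0$ one has $(|a|+|b|)^{2}=(a-b)^{2}$ and $(a-b)^{2}-(a+b)^{2}=4|ab|$, which after a short manipulation rewrites $g(t^{*})$ as $(|a|+|b|)^{2}\bigl(1+c^{2}/(4|ab|)\bigr)$, giving the second branch of the formula.

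The only mildly delicate point is the bookkeeping at the boundary of the two regimes (showing that the two branches of the formula agree along $|c(a+b)|=4|ab|$, which is forced by continuity, and that the case analysis covers the degenerate sub-cases $ab=0$ correctly via the convex branch). Everything else is elementary single-variable calculus.
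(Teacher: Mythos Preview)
Your proof is correct and follows exactly the approach the paper indicates: the paper does not give a detailed argument, merely stating that the formula ``can be obtained from a standard application of the Maximum Modulus Principle together with \cite[eq.~(3.1)]{AronKlimek},'' and your reduction to the torus, factorization yielding $|ae^{i\phi}+be^{-i\phi}+c|$, and subsequent case analysis on the quadratic in $\cos\phi$ is precisely that standard application spelled out in full.
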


So, for these polynomials $P_{2}$ and $ab<0$ and $|c(a+b)|\leq4|ab|,$ the
Bohnenblust--Hille inequality is%
\[
\left(  \sqrt[3]{a^{4}}+\sqrt[3]{b^{4}}+\sqrt[3]{c^{4}}\right)  ^{\frac{3}{4}%
}\leq D_{\mathbb{C},2}\left(  \left\vert a\right\vert +\left\vert b\right\vert \right)
\sqrt{1+\frac{c^{2}}{4\left\vert ab\right\vert }}%
\]
and thus%
\[
D_{\mathbb{C},2}\geq\frac{\left(  \sqrt[3]{a^{4}}+\sqrt[3]{b^{4}}+\sqrt[3]{c^{4}}\right)
^{\frac{3}{4}}}{\left(  \left\vert a\right\vert +\left\vert b\right\vert
\right)  \sqrt{1+\frac{c^{2}}{4\left\vert ab\right\vert }}}.
\]
So, we must find real scalars $a,b,c$ so that $ab<0,$ $|c(a+b)|\leq4|ab|$ and%
\[
f_{2}(a,b,c)=\frac{\left(  \sqrt[3]{a^{4}}+\sqrt[3]{b^{4}}+\sqrt[3]{c^{4}%
}\right)  ^{\frac{3}{4}}}{\left(  \left\vert a\right\vert +\left\vert
b\right\vert \right)  \sqrt{1+\frac{c^{2}}{4\left\vert ab\right\vert }}}%
\]
is as big as possible. A straightforward examination shows that%
\[
f_{2}(a,b,c)<1.1067
\]
for all $a,b,c$ and, on the other hand,
\[
f_{2}(1,-1,\frac{352\,203}{125\,000})\approx1.1066.
\]
Combining the previous result and the known fact that $D_{\mathbb{C},2}\leq 1.7431$ we have the following result:
\begin{theorem}
$$1.1066\leq D_{\mathbb{C},2}\leq 1.7431.$$
\end{theorem}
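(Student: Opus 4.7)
The upper bound $D_{\mathbb{C},2}\leq 1.7431$ is already attributed to Queff\'{e}lec in the introduction, so the proof reduces to establishing the lower estimate $D_{\mathbb{C},2}\geq 1.1066$. The plan is to produce an explicit $2$-homogeneous polynomial on $\ell_\infty^2(\mathbb{C})$ whose ratio of $\ell_{4/3}$-norm of coefficients to polynomial sup-norm comes within the quoted tolerance of $1.1066$, and to invoke the definition \eqref{equ:estimate_D_R,m} (which is meaningful for complex scalars as well).

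First, I would restrict attention to the three-parameter family $P_2(z_1,z_2)=az_1^2+bz_2^2+cz_1z_2$ with $a,b,c\in\mathbb{R}$. The preceding proposition (based on the Maximum Modulus Principle together with \cite{AronKlimek}) provides an explicit closed form for $\|P_2\|$ that splits into two cases. I would focus on the regime
\[
ab<0\qquad\text{and}\qquad |c(a+b)|\leq 4|ab|,
\]
in which $\|P_2\|=(|a|+|b|)\sqrt{1+c^2/(4|ab|)}$. Substituting into the Bohnenblust--Hille inequality yields $D_{\mathbb{C},2}\geq f_2(a,b,c)$ where $f_2$ is the explicit function displayed in the excerpt.

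Since $f_2$ is invariant under the positive scaling $(a,b,c)\mapsto(\lambda a,\lambda b,\lambda c)$ and under the symmetries $(a,b,c)\mapsto(-a,-b,c)$ and $(a,b,c)\mapsto(b,a,c)$, I would normalize to $a=1,\ b=-1$. Then $a+b=0$, so the constraint $|c(a+b)|\leq 4|ab|$ becomes trivial, and the problem reduces to the one-variable optimization
\[
\sup_{c\geq 0}\ \frac{\bigl(2+c^{4/3}\bigr)^{3/4}}{2\sqrt{1+c^2/4}}.
\]
Setting the derivative to zero produces a transcendental equation; a numerical solution locates a unique positive maximizer $c_*$, and choosing the rational approximant $c=352203/125000$ gives $f_2(1,-1,c)\approx 1.1066$, which yields the desired lower bound.

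The main obstacle is twofold. First, the critical-point equation for $f_2(1,-1,c)$ does not have a clean radical solution, so the argument is unavoidably numerical at the final step; one must be careful that the approximation error in evaluating $f_2$ at the chosen rational $c$ is safely below the $10^{-4}$ margin built into the constant $1.1066$. Second, one must justify that the restriction to real coefficients and to the distinguished regime $ab<0$, $|c(a+b)|\leq 4|ab|$ does not lose anything quantitatively beyond what is needed: a brief inspection of the complementary regime $\|P_2\|=|a+b|+|c|$ shows that the analogous supremum of $f_2$ is strictly smaller than $1.1066$, so the chosen regime is indeed the favourable one. Combining the resulting lower bound with Queff\'{e}lec's upper estimate yields the claimed two-sided inequality.
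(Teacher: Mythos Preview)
Your proposal is correct and follows essentially the same route as the paper: restrict to real-coefficient polynomials $az_1^2+bz_2^2+cz_1z_2$, apply the Aron--Klimek norm formula in the regime $ab<0$, $|c(a+b)|\le 4|ab|$, and evaluate the resulting ratio $f_2$ at $(1,-1,352203/125000)$ to obtain $\approx 1.1066$, then combine with Queff\'elec's upper bound. Your explicit symmetry reduction to $a=1$, $b=-1$ and your remark about the complementary regime are a bit more detailed than what the paper records, but the underlying argument is the same.
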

\section{Final remarks}

In the real case we were able to deal with the case $m\geq2$ even in the absence of information on the geometry of the unit ball of ${\mathcal P}(^m\ell_\infty^n)$. However, in the complex case the technique seemed less effective for $m\geq3$. For obtaining lower estimates for $D_{\mathbb{C},m}$, with $m\geq3$ and sharper estimates for $D_{\mathbb{R},m}$, we believe that some effort should be made to get more information on the geometry of the unit ball of ${\mathcal P}(^m\ell_\infty^n)$ for higher values of $m,n$.

We do hope that the present work may serve as a motivation for future works investigating the geometry of the unit ball of complex and real polynomial spaces on $\ell^n_\infty$.

%%%%%%%%%%%
%%%%%%%%%%%
%%%%%%%%%%%
%%%%%%%%%%%
% REFERENCES
%%%%%%%%%%%
%%%%%%%%%%%
%%%%%%%%%%%
%%%%%%%%%%%
\begin{bibdiv}
\begin{biblist}

\bib{AronKlimek}{article}{
   author={Aron, Richard M.},
   author={Klimek, Maciej},
   title={Supremum norms for quadratic polynomials},
   journal={Arch. Math. (Basel)},
   volume={76},
   date={2001},
   number={1},
   pages={73--80},
}

%\bib{monats}{article}{
 %  author={Blasco, Oscar},
  % author={Botelho, Geraldo},
   %author={Pellegrino, Daniel},
   %author={Rueda, Pilar},
   %title={Summability of multilinear mappings: Littlewood, Orlicz and
   %beyond},
   %journal={Monatsh. Math.},
   %volume={163},
   %date={2011},
   %number={2},
   %pages={131--147},
%}

\bib{boas}{article}{
   author={Boas, Harold P.},
   author={Khavinson, Dmitry},
   title={Bohr's power series theorem in several variables},
   journal={Proc. Amer. Math. Soc.},
   volume={125},
   date={1997},
   number={10},
   pages={2975--2979},
}

\bib{bh}{article}{
   author={Bohnenblust, H. F.},
   author={Hille, Einar},
   title={On the absolute convergence of Dirichlet series},
   journal={Ann. of Math. (2)},
   volume={32},
   date={1931},
   number={3},
   pages={600--622},
}

\bib{Choi}{article}{
   author={Choi, Yun Sung},
   author={Kim, Sung Guen},
   title={The unit ball of $\scr P(^2\!l^2_2)$},
   journal={Arch. Math. (Basel)},
   volume={71},
   date={1998},
   number={6},
   pages={472--480},
}

\bib{Davie}{article}{
   author={Davie, A. M.},
   title={Quotient algebras of uniform algebras},
   journal={J. London Math. Soc. (2)},
   volume={7},
   date={1973},
   pages={31--40},
}

\bib{DDGM}{article}{
   author={Defant, Andreas},
   author={D{\'{\i}}az, Juan Carlos},
   author={Garc{\'{\i}}a, Domingo},
   author={Maestre, Manuel},
   title={Unconditional basis and Gordon-Lewis constants for spaces of
   polynomials},
   journal={J. Funct. Anal.},
   volume={181},
   date={2001},
   number={1},
   pages={119--145},
}

\bib{defant3}{book}{
   author={Defant, Andreas},
   author={Floret, Klaus},
   title={Tensor norms and operator ideals},
   series={North-Holland Mathematics Studies},
   volume={176},
   publisher={North-Holland Publishing Co.},
   place={Amsterdam},
   date={1993},
   pages={xii+566},
}

\bib{annals2011}{article}{
   author={Defant, Andreas},
   author={Frerick, Leonhard},
   author={Ortega-Cerd\`{a}, Joaquim},
   author={Ouna{\"{\i}}es, Myriam},
   author={Seip, Kristian},
   title={The Bohnenblust-Hille inequality for homogeneous polynomials is
   hypercontractive},
   journal={Ann. of Math. (2)},
   volume={174},
   date={2011},
   number={1},
   pages={485--497},
}

\bib{defant111}{article}{
   author={Defant, Andreas},
   author={Garc{\'{\i}}a, Domingo},
   author={Maestre, Manuel},
   author={P\'{e}rez-Garc{\'{\i}}a, David},
   title={Bohr's strip for vector valued Dirichlet series},
   journal={Math. Ann.},
   volume={342},
   date={2008},
   number={3},
   pages={533--555},
}

\bib{ff}{article}{
   author={Defant, Andreas},
   author={Garc{\'{\i}}a, Domingo},
   author={Maestre, Manuel},
   author={Sevilla-Peris, Pablo},
   title={Bohr's strips for Dirichlet series in Banach spaces},
   journal={Funct. Approx. Comment. Math.},
   volume={44},
   date={2011},
   number={part 2},
   part={part 2},
   pages={165--189},
}

\bib{defant55}{article}{
   author={Defant, Andreas},
   author={Maestre, Manuel},
   author={Schwarting, U.},
   title={Bohr radii for vector valued holomorphic functions},
   status={Preprint},
}

\bib{defant}{article}{
   author={Defant, Andreas},
   author={Popa, Dumitru},
   author={Schwarting, Ursula},
   title={Coordinatewise multiple summing operators in Banach spaces},
   journal={J. Funct. Anal.},
   volume={259},
   date={2010},
   number={1},
   pages={220--242},
}

\bib{defant2}{article}{
   author={Defant, Andreas},
   author={Sevilla-Peris, Pablo},
   title={A new multilinear insight on Littlewood's 4/3-inequality},
   journal={J. Funct. Anal.},
   volume={256},
   date={2009},
   number={5},
   pages={1642--1664},
}

\bib{Di}{book}{
   author={Diestel, Joe},
   author={Jarchow, Hans},
   author={Tonge, Andrew},
   title={Absolutely summing operators},
   series={Cambridge Studies in Advanced Mathematics},
   volume={43},
   publisher={Cambridge University Press},
   place={Cambridge},
   date={1995},
   pages={xvi+474},
}

\bib{DD}{article}{
author={Diniz, D.},
author={Mu\~{n}oz-Fern\'{a}ndez, G. A.},
author={Pellegrino, D.},
author={Seoane-Sep\'{u}lveda, J. B.},
title={The asymptotic growth of the constants in the Bohnenblust-Hille inequality is optimal},
journal={arXiv:1108.1550v2 [math.FA]},
}

\bib{Mu}{article}{
author={Diniz, D.},
author={Mu\~{n}oz-Fern\'{a}ndez, G. A.},
author={Pellegrino, D.},
author={Seoane-Sep\'{u}lveda, J. B.},
title={Lower bounds for the constants in the Bohnenblust--Hille
inequality: the case of real scalars},
status={arXiv:1111.3253v2  [math.FA]},
}

\bib{Gamez}{article}{
author={G\'{a}mez-Merino, J. L.},
author={Mu\~{n}oz-Fern\'{a}ndez, G. A.},
author={S\'{a}nchez, V.},
author={Seoane-Sep\'{u}lveda, J. B.},
title={Inequalities for polynomials on the unit square via the Krein--Milman Theorem},
status={Preprint},
}

\bib{Ga}{book}{
   author={Garling, D. J. H.},
   title={Inequalities: a journey into linear analysis},
   publisher={Cambridge University Press},
   place={Cambridge},
   date={2007},
   pages={x+335},
}

\bib{Ka}{article}{
   author={Kaijser, Sten},
   title={Some results in the metric theory of tensor products},
   journal={Studia Math.},
   volume={63},
   date={1978},
   number={2},
   pages={157--170},
   issn={0039-3223},
   review={\MR{511301 (80c:46069)}},
}

\bib{KM}{article}{
   author={Krein, M.},
   author={Milman, D.},
   title={On extreme points of regular convex sets},
   journal={Studia Math.},
   volume={9},
   date={1940},
   pages={133--138},
}

\bib{LP}{article}{
   author={Lindenstrauss, J.},
   author={Pe{\l}czy{\'n}ski, A.},
   title={Absolutely summing operators in $L_{p}$-spaces and their
   applications},
   journal={Studia Math.},
   volume={29},
   date={1968},
   pages={275--326},
}

\bib{Litt}{article}{
   author={Littlewood, J.E.},
   title={On bounded bilinear forms in an infinite number of variables},
   journal={Q. J. Math.},
   volume={1},
   date={1930},
   pages={164--174},
}

\bib{lama11}{article}{
author={Mu\~{n}oz-Fern\'{a}ndez, G. A.},
author={Pellegrino, D.},
author={Seoane-Sep\'{u}lveda, J. B.},
title={Estimates for the asymptotic behavior of the constants in the
Bohnenblust--Hille inequality},
journal={Linear Multilinear Algebra},
status={In Press}
}

\bib{trin}{article}{
   author={Mu\~{n}oz-Fern\'{a}ndez, Gustavo A.},
   author={Seoane-Sep\'{u}lveda, Juan B.},
   title={Geometry of Banach spaces of trinomials},
   journal={J. Math. Anal. Appl.},
   volume={340},
   date={2008},
   number={2},
   pages={1069--1087},
   issn={0022-247X},
   review={\MR{2390911 (2008m:46014)}},
   doi={10.1016/j.jmaa.2007.09.010},
}

\bib{psseo}{article}{
   author={Pellegrino, Daniel},
   author={Seoane-Sep\'{u}lveda, Juan B.},
   title={New upper bounds for the constants in the Bohnenblust-Hille
   inequality},
   journal={J. Math. Anal. Appl.},
   volume={386},
   date={2012},
   number={1},
   pages={300--307},
}

\bib{Que}{article}{
author={Queff\'{e}lec, H.},
title={H. Bohr's vision of ordinary Dirichlet series: old and new results},
journal={J. Anal.},
volume={3},
date={1995},
pages={43--60},
}

\bib{TJ}{book}{
   author={Tomczak-Jaegermann, Nicole},
   title={Banach-Mazur distances and finite-dimensional operator ideals},
   series={Pitman Monographs and Surveys in Pure and Applied Mathematics},
   volume={38},
   publisher={Longman Scientific \& Technical},
   place={Harlow},
   date={1989},
   pages={xii+395},
}

\end{biblist}
\end{bibdiv}

\end{document}